\newtheorem{defn}{Definition}[section]
\newtheorem{thm}[defn]{Theorem}
\newtheorem{lem}[defn]{Lemma}
\newtheorem{prp}[defn]{Proposition}
\newtheorem*{prm}{Problem}
\newtheorem*{hyp}{Hypothesis}
\newtheorem{rem}{Remark}
\numberwithin{equation}{section}
\titleformat{\section}{\LARGE\bfseries}{\thesection}{1em}{}
\titleformat{\subsection}{\Large\bfseries}{\thesubsection}{1em}{}
\renewcommand\thesection{\arabic{section}} 
\renewcommand\thesubsection{\arabic{section}.\arabic{subsection}}
\newcommand{\keywords}[1]{\textbf{Keywords}: #1}
\begin{document}

\title{Variational inequalities of multilayer elastic systems with interlayer friction: existence and uniqueness of solution and convergence of numerical solution}

\author[1]{ Zhizhuo Zhang \thanks{zhizhuo\_zhang@163.com}}
\author[1]{ Xiaobing Nie \thanks{xbnie@seu.edu.cn}}
\author[1]{ Jinde Cao \thanks{ Corresponding author: jdcao@seu.edu.cn}}

\affil[1]{School of Mathematics, Southeast University, Nanjing, China}
\date{}

\maketitle

\begin{abstract}
Based on the mathematical-physical model of pavement mechanics, a multilayer elastic system with interlayer friction conditions is constructed. Given the complex boundary conditions, the corresponding variational inequalities of the partial differential equations are derived, so that the problem can be analyzed under the variational framework. First, the existence and uniqueness of the solution of the variational inequality is proved; then the approximation error of the numerical solution based on the finite element method is analyzed, and when the finite element space satisfies certain approximation conditions, the convergence of the numerical solution is proved; finally, in the trivial finite element space, the convergence order of the numerical solution is derived. 
The above conclusions provide basic theoretical support for solving the displacement-strain problem of multilayer elastic systems under the framework of variational inequalities.
\end{abstract}
\keywords{Variational inequalities; multilayer elastic systems; frictional contact problem; pavement mechanics; finite element method}

\section{Introduction}
The pavement mechanics model has always been an extremely critical basic research topic in the field of road traffic, which is of great significance to the damage, service life, inspection and maintenance of the pavement\cite{white2001coupling,cebon1993interaction, brownjohn2007structural}.
In order to study this problem, Boussinesq first proposed the subject of elastic half-spaces\cite{boussinesq1885application}, and after continuous development, methods such as potential functions\cite{burmister1945general} and rheology theory\cite{monismith1966rheologic} were proposed to analyze the stress-displacement problem of asphalt road.
With the development of computers, the numerical solution of the analytical expressions of elastic multilayer systems has gradually become a research hotspot, in which the continuous, smooth or partial friction between layers is also gradually being studied \cite{de1979computer,hayhoe2002leaf,huang2004pavement,khazanovich2007mnlayer}.
Among them, the main tools in the process of numerical solution of analytical expressions are Hankel integral transform\cite{muki1956three}, Laplace-Hankle integral transform, Fourier transform\cite{siddharthan1998pavement}, wavelet transform and convolution integral method\cite{zhao2014viscoelastic}, etc.
However, with the increasing complexity of the load, boundary conditions and constitutive relations, the derivation of analytical solutions for multilayer elastic systems becomes extremely difficult. Therefore, the method of directly simulating complex mathematical models has gradually become the mainstream of research in recent years. Naturally, finite element software such as Abaqus, Ansys, Adina and Comsol became the main tools for pavement modeling and numerical simulation\cite{wang2011analysis,wollny2016numerical,ma2022dynamic}.

In the traditional pavement design theory, it is generally assumed that the contact state between the structural layers is completely continuous, however, the actual pavement experiments show that the interlayer structure does not satisfy this assumption, but is in a semi-bonded state or even slip state under extreme conditions\cite{hu2011effects}. The huge difference between the experiment and the ideal model directly affects the evaluation of pavement performance, and it is necessary to reduce the error by constructing a new mathematical-physical model. 
The friction coefficient model based on Coulomb's law is an important improvement to the multilayer elastic system, in which the friction coefficient is used to characterize the interlayer bonding state, and a finite element model is established to simulate the pavement structure\cite{wu2017effects,kim2011numerical,guo2016assessing,you2020assessing}.

It can be found that the numerical solution of the stress-displacement problem for the multilayer elastic system is a mathematical problem with practical application background, and it can also be abstracted as the contact problem of elastic mechanics\cite{zienkiewicz2005finite}. 
In fact, the contact problem, which describes the physical situation of a deformable elastic body in contact with other objects, has received extensive attention due to its broad physical background\cite{gladwell1980contact}. Due to the complexity of the contact problem model, it is extremely challenging to directly use the finite element method to solve the exact numerical solution. Considering that the establishment of the initial partial differential equations of elastic systems is based on the small deformation assumption and the principle of minimum potential energy\cite{marsden1994mathematical,ciarlet2021mathematical}, the study of the contact problem under the framework of variational inequalities naturally becomes the 
pivotal method\cite{kikuchi1988contact}. Under the rich theoretical support of convex analysis, optimization and variational methods,  contact problems under various types of contact conditions and constitutive equations are systematically studied. Han, Weimin, et al. studied the quasi-static contact problems of viscoelastic and viscoplastic materials\cite{han2002quasistatic}. 
Then, Eck Christoph et al. studied the contact between two elastic bodies without friction\cite{eck2003convergence}.
In order to further simulate the real contact problem, Bayada Guy studied the contact problem of two elastic bodies with Coulomb friction\cite{bayada2008convergence}. 
And Krause Rolf gave a non-smooth multi-scale method to solve the numerical solution of the frictional contact problem of two bodys\cite{krause2009nonsmooth}.

However, the study of the contact problem under the framework of variational inequalities has never been applied to the modeling of pavement mechanics. In our study, a system of partial differential equations for a multilayer elastic system with interlayer Coulomb's friction will be constructed under the assumption of small deformation and Saint-Venant's principle. Among them, the constitutive equation will be described by a nonlinear operator satisfying a certain regularity condition, and the interlayer friction condition will be described by a functional related to the interlayer normal stress, which ensures the wide applicability of nonlinear multilayer elastic systems. At the same time, in order to be more in line with the real road conditions, the foundation is assumed to be a deformable body, that is, the contact condition between the elastic body and the foundation has normal compliance.
Then, the variational inequality derived from this system of partial differential equations will be introduced, and based on Banach's fixed point theorem \cite{han2000evolutionary} and the theory of elliptic variational inequality \cite{capatina2014variational}, the existence and uniqueness of the solution of this inequality is proved. 
Based on the finite element method, the approximation error between the numerical solution and the true solution and its convergence properties are also studied. 
The existence and uniqueness of the solution theoretically supports the rationality of studying the pavement displacement-stress relationship under the framework of variational inequalities. And the convergence of the numerical solutions obtained by the finite element method proves the feasibility of solving the corresponding numerical solutions under the framework of variational inequalities. Therefore, it is a feasible method to study the mechanical response of pavement structure under the variational inequality method.

The remainder of the paper will be organized as follows. In the section 2, the mathematical-physical model of the multilayer elastic system will be introduced, and then its corresponding variational inequalities will be derived. In section 3, the existence and uniqueness of the solution of the variational inequality will be proved. In section 4, discrete approximation errors for numerical solutions in abstract finite element spaces will be first derived. Then, under certain spatial assumptions, the convergence of the numerical solution will be further proved. Finally, in the most trivial finite element space, the order of convergence will be derived. In section 5, the entire paper will be summarized.

\section{The physical model and variational formulation}

\subsection{The physical model and PDEs}

\begin{figure}[t]
    \centering
    \includegraphics[width=6in]{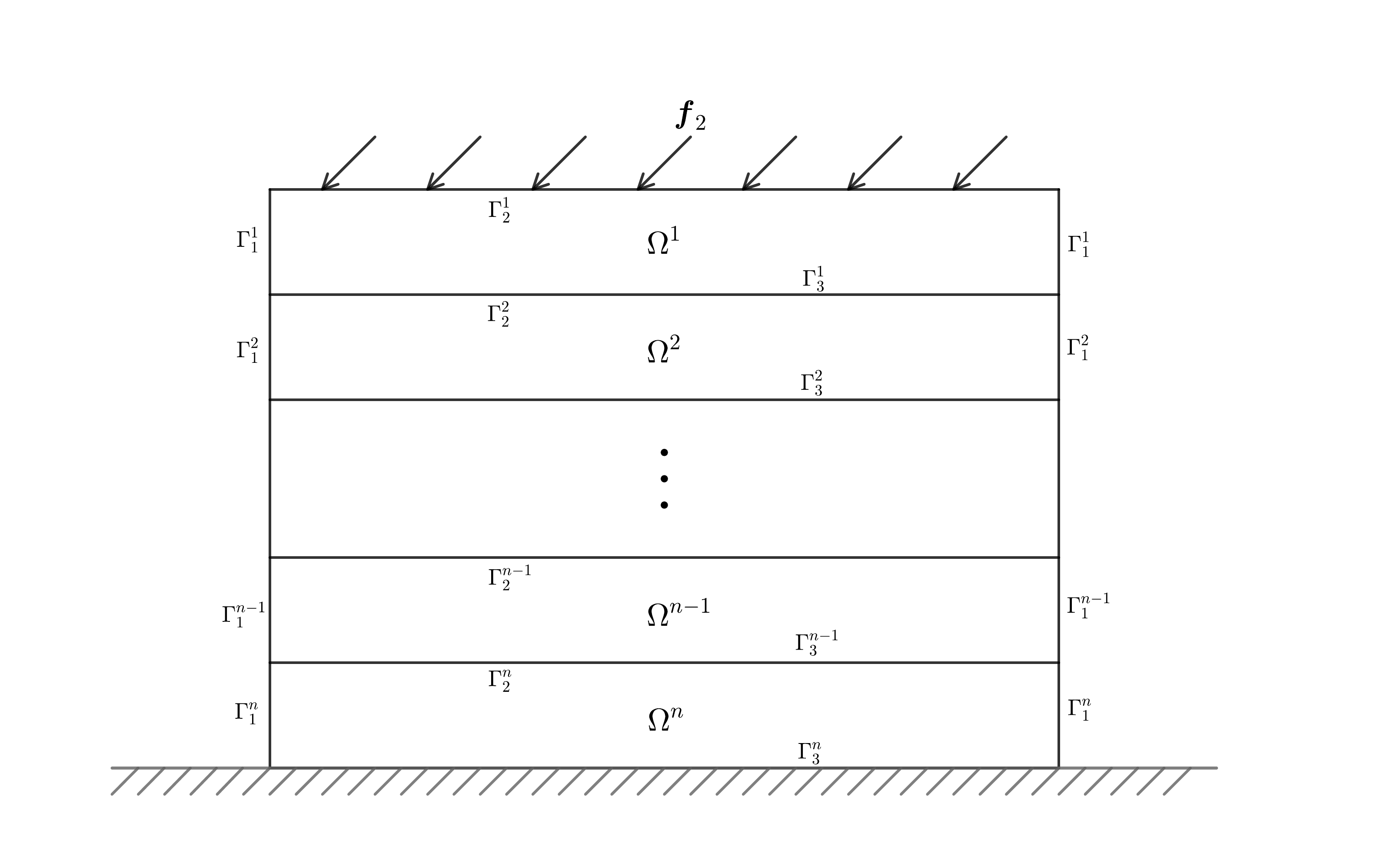}
    \caption{The physical model of multilayer contact problem}
    \label{fig1}
\end{figure}

In this section, a multilayer elastic system based on contact conditions with friction will be introduced. The physical model of the contact problem of the multilayer elastic system considered in this study is shown in Fig 1. The open bounded connected region occupied by each layer of elastic body is denoted as $\Omega^{i}\in\mathbb{R}^{d}$, where $d=2$ or $3$ and $i=1,2,\ldots,n$. Therefore, the total domain occupied by the multilayer elastic system is denoted as $\Omega=\cup_{i=1}^{n}\Omega^{i}$. Then, the boundary $\Gamma^{i}=\partial \Omega^{i}$ is assumed to be Lipschitz continuous and satisfy the decomposition $\Gamma^{i}=\cup_{j=1}^{3} \bar{\Gamma}_{j}^{i}$, where 
$\Gamma_{1}^{i}$, $\Gamma_{2}^{i}$, $\Gamma_{3}^{i}$ are mutually disjoint relatively open sets. 
Similarly, let $\Gamma_{j}=\cup_{i=1}^{n}\bar{\Gamma}_{j}^{i}$, $j=1,2,3$.
Let $\operatorname{meas}_{d}\left(\cdot\right)$ be the d-dimensional Lebesgue-measure of the set, and suppose $\operatorname{meas}_{d-1}\left(\Gamma_{1}^{i}\right)>0$ for $i=1,2,\ldots,n$. For $x\in\Gamma_{j}^{i}$, the unit outward normal vector of $x$ is denoted by $\nu$. Note that in the multilayer elastic system, when no force is applied, the boundaries $\Gamma_{2}^{i+1}$ and $\Gamma_{3}^{i}$ coincide with each other. 
However, these two boundaries are not equivalent to each other, because they belong to different elastic bodies and their unit outward normal vectors are in opposite directions.
For the convenience of calculation, the unit outward normal vectors of the boundary $\Gamma_{2}^{i}$ and $\Gamma_{3}^{i}$ ($i=1,2,\ldots,n$) are denoted as $\boldsymbol{\alpha}^{i}$ and $\boldsymbol{\beta}^{i}$, respectively, and let the friction boundary $\Gamma_{c}^{i}=\Gamma_{3}^{i}$.

Then, the external forces and contact conditions of the multilayer elastic system will be defined. First, the body force on the elastic body of the $i$-th layer is denoted as $\boldsymbol{f}_{0}^{i}$, $i=1,2,\ldots,n$. Then, the elastic body of the $i$-th layer is fixed on the boundary $\Gamma_{1}^{i}$, that is, the Dirichlet boundary condition is satisfied on the boundary $\Gamma_{1}^{i}$, so the displacement field vanishes there. The surface traction force $\boldsymbol{f}_{2}$ acts on the boundary $\Gamma_{2}^{1}$ of the first layer of elastic body, and for the $i$-th layer of elastic body ($i=2,3,\ldots,n$), its boundary $\Gamma_{2}^{i}$ is acted by the force from the upper layer of elastic body, which will be subject to the Coulomb's friction condition. Finally, on the boundary $\Gamma_{3}^{i}$ ($i=1,2,\ldots,n-1$), the body is in Coulomb's frictional contact with the next layer of body, while on the boundary $\Gamma_{3}^{n}$, the frictional contact with normal compliance is satisfied, which is based on foundation assumptions.
\begin{rem}
It is worth noting that the Saint-Venant principle ensures that the boundary condition can simulate the real road conditions well, and the quasi-static condition ensures that the small deformation can reach equilibrium in the system in a very short time without considering the effect of acceleration. Therefore, under these two assumptions, the model can well simulate the instantaneous mechanical response of asphalt pavement.
\end{rem}

Under the action of force, the elastic body of the $i$-th layer will deform, and its displacement field is denoted as $\boldsymbol{u}^{i}: \Omega^{i} \to \mathbb{R}^{d}$ ($i=1,2,\ldots,n$). At the same time, the stress tensor inside the object is denoted as $\boldsymbol{\sigma}^{i}: \Omega^{i} \to \mathbb{S}^{d}$, where $\mathbb{S}^{d}$ represents the space of second order symmetric tensors on $\mathbb{R}^{d}$. 
Then, "$\cdot$" and "$:$" represent the inner product in $\mathbb{R}^{d}$ and $\mathbb{S}^{d}$, respectively, and "$|\cdot|$" represents the Euclidean norm. And the space of displacement field and stress function is defined as:
$$
\begin{aligned}
V^{i} &=\left\{\boldsymbol{v}=\left(v_{k}\right) \in\left(H^{1}(\Omega^{i})\right)^{d}: \boldsymbol{v}=\mathbf{0} \text { on } \Gamma^{i}_{1}\right\}, \\
Q^{i} &=\left\{\boldsymbol{\tau}=\left(\tau_{kl}\right) \in\left(L^{2}(\Omega^{i})\right)^{d \times d}: \tau_{lk}=\tau_{lk}, 1 \leqslant k, l \leqslant d\right\}, \\
Q^{i}_{1} &=\left\{\boldsymbol{\tau} \in Q^{i}: \operatorname{Div} \boldsymbol{\tau} \in\left(L^{2}(\Omega^{i})\right)^{d}\right\},
\end{aligned}
$$
where $i=1,2\ldots,n$ and $H^{1}(\cdot)=W^{1,2}(\cdot)$ is Sobolev space. Therefore, on the domain $\Omega$, let $\boldsymbol{u}=\left(\boldsymbol{u}^{1}, \boldsymbol{u}^{2}, \ldots , \boldsymbol{u}^{n} \right)$ and $\boldsymbol{\sigma}=\left(\boldsymbol{\sigma}^{1}, \boldsymbol{\sigma}^{2}, \ldots , \boldsymbol{\sigma}^{n} \right)$. Then $\boldsymbol{u}\in V$ and $\boldsymbol{\sigma}\in Q_{1}$, where
$$
V = V^{1} \times V^{2} \times \cdots \times V^{n} \text{ and } 
Q_{1}= Q_{1}^{1} \times Q_{1}^{2} \times \cdots \times Q_{1}^{n}.
$$
Based on the definition, it is easy to verify that the above spaces are Hilbert spaces, so the standard norm can be defined as follows:
$$
(\boldsymbol{u}^{i},\boldsymbol{v}^{i})_{H^{1}\left(\Omega^{i}\right)^{d}} = \sum_{k=1}^{d} (u^{i}_{k},v^{i}_{k})_{H^{1}\left(\Omega^{i}\right)}
$$
and
$$
(\boldsymbol{\sigma}^{i},\boldsymbol{\tau}^{i})_{L^{2}\left(\Omega^{i}\right)^{d}} = \sum_{k,l=1}^{d} (\sigma^{i}_{kl},\tau^{i}_{kl})_{L^{2}\left(\Omega^{i}\right)},
$$
where $\boldsymbol{u}^{i},\boldsymbol{v}^{i}\in V^{i}$ and $\boldsymbol{\sigma}^{i},\boldsymbol{\tau}^{i}\in Q^{i}$ ($i=1,2,\ldots,n$). From this, the standard inner product on spaces $V$ and $Q$ can be defined as:
$$
(\boldsymbol{u},\boldsymbol{v})_{H^{1}} = \sum_{i=1}^{n} \sum_{k=1}^{d} (u^{i}_{k},v^{i}_{k})_{H^{1}\left(\Omega^{i}\right)}
$$
and
$$
(\boldsymbol{\sigma},\boldsymbol{\tau})_{L^{2}} = \sum_{i=1}^{n} \sum_{k,l=1}^{d} (\sigma^{i}_{kl},\tau^{i}_{kl})_{L^{2}\left(\Omega^{i}\right)}.
$$
Based on the assumption of small deformation, the stress tensor $\boldsymbol{\varepsilon}\left(\boldsymbol{v}^{i}\right)\in Q^{i}$ can be defined as:
$$
\boldsymbol{\varepsilon}\left(\boldsymbol{v}^{i}\right)=\frac{1}{2}\left(\nabla \boldsymbol{v}^{i}+\left(\nabla \boldsymbol{v}^{i}\right)^{T}\right).
$$
Since $\operatorname{meas}_{d-1}\left(\Gamma_{1}^{i}\right)>0$, Korn's inequality holds\cite{kikuchi1988contact}:
$$
\|\boldsymbol{v}^{i}\|_{H^{1}(\Omega^{i})^{d}} \leqslant c_{k}^{i}\|\boldsymbol{\varepsilon}(\boldsymbol{v^{i}})\|_{L^{2}\left(\Omega^{i}\right)^{d}} \quad \forall \boldsymbol{v}^{i} \in V^{i},
$$
where $c_{k}^{i}>0$ is a constant depending only on $\Omega^{i}$ and $\Gamma_{1}^{i}$. Based on Korn's inequality, the inner product can be defined:
$$
(\boldsymbol{u}^{i}, \boldsymbol{v}^{i})_{V^{i}} =(\boldsymbol{\varepsilon}(\boldsymbol{u}^{i}), \boldsymbol{\varepsilon}(\boldsymbol{v}^{i}))_{L^{2}\left(\Omega^{i}\right)^{d}} \quad \forall \boldsymbol{u}^{i}, \boldsymbol{v}^{i} \in V^{i},
$$
and the norm $\|\cdot\|_{V^{i}}$ induced by this inner product is equivalent to the standard norm $\|\cdot\|_{H^{1}(\Omega^{i})^{d}}$ in $V^{i}$ space. Therefore, $\left(V^{i}, \|\cdot\|_{V^{i}}\right)$ is a real Hilbert space.
Similarly, the inner product can be defined in space $V$:
$$
(\boldsymbol{u}, \boldsymbol{v})_{V} = \sum_{i=1}^{n} (\boldsymbol{\varepsilon}(\boldsymbol{u}^{i}), \boldsymbol{\varepsilon}(\boldsymbol{v}^{i}))_{L^{2}\left(\Omega^{i}\right)^{d}} \quad \forall \boldsymbol{u}, \boldsymbol{v} \in V,
$$
It is easy to verify that $\left(V, \|\cdot\|_{V}\right)$ is also a real Hilbert space. Furthermore, according to the trace theorem on Sobolev spaces\cite{adams2003sobolev}, there exists a constant $c_{t}^{i}$ that depends only on $\Omega^{i}$ and $\Gamma^{i}$, such that the following inequality holds:
\begin{equation}\label{2.0}
\|\boldsymbol{v}^{i}\|_{L^{2}\left(\Gamma_{3}^{i}\right)^{d}} \leqslant c_{t}^{i}\|\boldsymbol{v}^{i}\|_{V^{i}},~ \forall \boldsymbol{v}^{i} \in V^{i}.
\end{equation}
In addition, we also assume that for the region $e^{i}$ satisfying $\operatorname{meas}_{d-1}\left(e^{i}\right)>0$ ($i=1,2,\ldots,n$) on the boundary $\partial\Omega^{i}$, the following inverse estimates hold:
\begin{align}\label{2.01}
\|\boldsymbol{\varepsilon}(\boldsymbol{v}^{i})\|_{L^{2}\left(e^{i}\right)^{d}}
\leqslant c_{h}^{i} \|\boldsymbol{\varepsilon}(\boldsymbol{v}^{i})\|_{L^{2}\left(\Omega^{i}\right)^{d}},~ \forall \boldsymbol{v}^{i} \in V^{i},
\end{align}
where the constant $c_{h}^{i}>0$ is determined by $e^{i}$ and $\Omega^{i}$. 
This inequality was originally proposed for the piecewise polynomial function space \cite{schwab1998p,riviere2003discontinuous}. In the Sobolev space $H_{1}(\Omega)^{d}$, this inequality can be proved by the trace theorem of Besov space, Korn's inequality and Soboblev's inequality.

In order to facilitate the definition of contact boundary conditions, the displacement and stress on the boundaries $\Gamma^{i}_j$ ($j=2,3$) are decomposed as follows:
\begin{align*}
&v_{\beta}^{i}=\boldsymbol{v}^{i} \cdot \boldsymbol{\beta}^{i}, && \boldsymbol{v}_{\eta}^{i}=\boldsymbol{v}^{i}-v_{\beta}^{i}\cdot\boldsymbol{\beta}^{i}, && \boldsymbol{v}^{i}\in \Gamma^{i}_3;\\
&v_{\alpha}^{i}=\boldsymbol{v}^{i} \cdot \boldsymbol{\alpha}^{i}, && \boldsymbol{v}_{\tau}^{i}=\boldsymbol{v}^{i}-v_{\alpha}^{i}\cdot\boldsymbol{\alpha}^{i}, && \boldsymbol{v}^{i}\in \Gamma^{i}_2;\\
&\sigma_{\beta}^{i}=\boldsymbol{\beta}^{i}\cdot\boldsymbol{\sigma}^{i} \cdot \boldsymbol{\beta}^{i}, && \boldsymbol{\sigma}_{\eta}^{i}=\boldsymbol{\sigma}^{i} \cdot \boldsymbol{\beta}^{i}-\sigma_{\beta}^{i}\cdot\boldsymbol{\beta}^{i}, && \boldsymbol{\sigma}^{i}\in Q^{i};\\
&\sigma_{\alpha}^{i}=\boldsymbol{\alpha}^{i}\cdot\boldsymbol{\sigma}^{i} \cdot \boldsymbol{\alpha}^{i}, && \boldsymbol{\sigma}_{\tau}^{i}=\boldsymbol{\sigma}^{i} \cdot \boldsymbol{\alpha}^{i}-\sigma_{\alpha}^{i}\cdot\boldsymbol{\alpha}^{i}, && \boldsymbol{\sigma}^{i}\in Q^{i}.
\end{align*}
On the boundary $\Gamma_{3}^{i}$, $v_{\beta}^{i}$ and $\boldsymbol{v}_{\eta}^{i}$ represent the normal and tangential displacements, respectively, and $\sigma_{\beta}^{i}$ and $\boldsymbol{\sigma}_{\eta}^{i}$ represent the normal and tangential stresses, respectively. Similarly, on the boundary $\Gamma_{2}^{i}$, $v_{\alpha}^{i}$, $\boldsymbol{v}_{\tau}^{i}$, $\sigma_{\alpha}^{i}$ and $\boldsymbol{\sigma}_{\tau}^{i}$ represent the displacements and stresses on the corresponding components.
Based on the boundary $\Gamma_{c}^{i}=\Gamma_{3}^{i}$, let $v_{N}^{i}=v_{\beta}^{i}$, $\boldsymbol{v}_{T}^{i}=\boldsymbol{v}_{\eta}^{i}$, $\sigma_{N}^{i}=\sigma_{\beta}^{i}$ and $\boldsymbol{\sigma}_{T}^{i}=\boldsymbol{\sigma}_{\eta}^{i}$.
In order to characterize the discontinuity of the displacement field on the friction boundary, the jump operator $[\cdot]$ is defined as:
\begin{align*}
    \left[{v}_{N}^{i}\right]&=\boldsymbol{v}^{i} \cdot \boldsymbol{\beta}^{i}+\boldsymbol{v}^{i+1} \cdot \boldsymbol{\alpha}^{i+1}\\
    \left[\boldsymbol{v}_{T}^{i}\right]&=\boldsymbol{v}_{\eta}^{i} - \boldsymbol{v}_{\tau}^{i+1},
\end{align*}
where $\boldsymbol{v}^{i}\in \Gamma^{i}_{3}$ and $\boldsymbol{v}^{i+1}\in \Gamma^{i+1}_{2}$ ($i=1,2,\ldots,n-1$).
Note that the contact between the elastic bodies on the contact boundary $\Gamma_{c}^{i}$ ($i=1,2,\ldots n-1$) satisfies the non-penetration condition, that is $\left[{v}_{N}^{i}\right]\leqslant 0$.

Under the above conditions, the frictional contact problem of the multilayer elastic system can be described by the following partial differential equation:
\begin{prm}[$P_{0}$]\label{p0}
Find a displacement field $\boldsymbol{u}^{i}:\Omega^{i} \rightarrow \mathbb{R}^{d}$ and the stress field $\boldsymbol{\sigma}^{i}: \Omega^{i} \rightarrow \mathbb{S}^{d}$ ($i=1,2,\ldots,n$) such that:
\begin{align}
&\boldsymbol{\sigma}^{i}=\mathcal{A}^{i} \boldsymbol{\varepsilon}(\boldsymbol{u}^{i}) && \text { in } \Omega^{i},\label{2.1}\\
&\operatorname{Div} \boldsymbol{\sigma}^{i}+\boldsymbol{f}_{0}^{i}=\mathbf{0} && \text { in } \Omega^{i},\label{2.2}\\
&\boldsymbol{u}^{i}=\mathbf{0} && \text { on } \Gamma_{1}^{i},\label{2.3}\\
&\boldsymbol{\sigma}^{1} \cdot \boldsymbol{\alpha}^{1}=\boldsymbol{f}_{2} && \text { on } \Gamma_{2}^{1},\label{2.4}\\
&\left.
\begin{aligned}
& \sigma^{n}_{\beta} = -g^{n}_{N}(u^{n}_{\beta}),~ |\boldsymbol{\sigma}_{\eta}^{n}|\leqslant g^{n}_{T}(u^{n}_{\beta})\\
& |\boldsymbol{\sigma}_{\eta}^{n}| < g^{n}_{T}(u^{n}_{\beta}) \Rightarrow
\boldsymbol{u}_{\eta}^{n} = 0\\
& |\boldsymbol{\sigma}_{\eta}^{n}| = g^{n}_{T}(u^{n}_{\beta}) \Rightarrow
\boldsymbol{\sigma}_{\eta}^{n} = -\lambda\boldsymbol{u}_{\eta}^{n}, ~\lambda\geqslant 0
\end{aligned}
\right\} &&  \text { on } \Gamma_{3}^{n},\label{2.5}\\
&\sigma^{i}_{\alpha} = -\sigma^{i-1}_{\beta},~
\boldsymbol{\sigma}_{\tau}^{i} = \boldsymbol{\sigma}_{\eta}^{i-1},
&&\text { on } \Gamma_{2}^{i}, ~i\ne 1, \label{2.6}\\
&\left.
\begin{aligned}
& \left[{u}_{N}^{i}\right] \leqslant 0,~ \sigma^{i}_{N}\cdot[u_{N}^{i}] = 0,~ |\boldsymbol{\sigma}_{T}^{i}|\leqslant g^{i}_{T}(\sigma^{i}_{N})\\
& |\boldsymbol{\sigma}_{T}^{i}| < g^{i}_{T}(\sigma^{i}_{N}) \Rightarrow
[\boldsymbol{u}_{T}^{i}] = 0\\
& |\boldsymbol{\sigma}_{T}^{i}| = g^{i}_{T}(\sigma^{i}_{N}) \Rightarrow
\boldsymbol{\sigma}_{T}^{i} = -\lambda[\boldsymbol{u}_{T}^{i}], ~\lambda\geqslant 0
\end{aligned}
\right\} &&  \text { on } \Gamma_{c}^{i},~i\ne n.\label{2.7}
\end{align}
\end{prm}
In the above partial differential equation, formula (\ref{2.1}) represents the constitutive relation of stress and strain, where $\mathcal{A}^{i}$ is a given nonlinear operator, which is called elastic operator. Equation (\ref{2.2}) represents the equilibrium equation between stress and body force; formulations (\ref{2.3}) and (\ref{2.4}) are displacement field constraints (Dirichlet boundary conditions) and stress boundary conditions, respectively.

Then, formula (\ref{2.5}) represents the frictional contact condition between the elastic layer and the foundation on the boundary $\Gamma_{3}^{n}$. Based on the physical background of the pavement structure, the foundation usually does not have a rigid structure, so the contact condition of the contact surface $\Gamma_{3}^{n}$ between the elastic layer and the foundation has normal compliance. There are many ways to define the contact function $g_{N}^{n}\left(\cdot\right)$, such as\cite{han2002quasistatic}:
\begin{equation}\label{2.8}
g_{N}^{n}\left(r\right)=c^{n}\left(r_{+}\right)^{m} ~~\text{or}~~
g_{N}^{n}\left(r\right)=\left\{\begin{array}{lll}
c^{n} r+ & \text { if } & r \leqslant r_{0}^{n} \\
c^{n} r_{0} & \text { if } & r>r_{0}^{n}
\end{array}\right.,
\end{equation}
where $r_{+}=\max \{0, r\}$ and $r_{0}^{n}$ is a positive coefficient related to the wear and hardness of the surface. Then, based on Coulomb's friction law, there is an upper bound function $g_{T}^{n}\left(\cdot\right)$ for the tangential stress $\boldsymbol{\sigma}_{\eta}^{n}$ representing the maximum frictional resistance, which is related to the normal stress $\sigma_{\beta}^{n}$.
When the tangential stress is strictly less than the upper bound $g_{T}^{n}\left(\cdot\right)$, the elastic body is in a state of static friction, that is, no displacement occurs on the boundary $\Gamma_{3}^{n}$, which is so-called the stick state. 
But once the tangential stress reaches the upper bound, that is, the equation is established, the elastic body is displaced on the boundary $\Gamma_{3}^{n}$, which is called the slip state. 
Note that based on the previous assumptions, the normal stress $\sigma_{\beta}^{n}$ is determined by the normal displacement $u_{\beta}^{n}$, so it may be possible to make the friction $g_{T}^{n}\left(\cdot\right)$ also a function of $u_{\beta}^{n}$. For example, under the classical Coulomb's friction law, we have
\begin{equation}\label{2.9}
    g_{T}^{n}\left(u_{\beta}^{n}\right)=\mu^{n} g_{N}^{n}\left(u_{\beta}^{n}\right)
\end{equation}
with $\mu^{n} \geqslant 0$. Based on the assumptions of thermodynamics, the modified Coulomb friction law has the following relationship:
\begin{equation}\label{2.10}
g_{T}^{n}\left(u_{\beta}^{n}\right)=\mu^{n} g_{N}^{n}\left(u_{\beta}^{n}\right) \cdot \left(1-\delta^{n} g_{N}^{n}\left(u_{\beta}^{n}\right)\right)_{+},
\end{equation}
where $\delta^{n}$ is a small positive coefficient related to the wear and hardness of the surface. The physical meaning of the above friction law is that if the normal stress exceeds the upper bound $1/\delta^{n}$, the contact surface will be destroyed and the tangential resistance will disappear.

Finally, formulations (\ref{2.6}) and (\ref{2.7}) represent the frictional contact conditions between the layers of elastic bodies. Here, Equation (\ref{2.6}) indicates that the force between the two elastic bodies is equal in magnitude and opposite in direction. Since both sides of the contact surface $\Gamma_{c}^{i}$ are elastic bodies that deform under the action of force, relative displacements, ie, $\left[u_{N}^{i}\right]$ and $\left[\boldsymbol{u}_{T}^{i}\right]$, must be used to describe the law of friction. Based on the relationship between friction force and displacement, formula (\ref{2.7}) can also be easily deduced, in which the friction function is defined in a similar way to (\ref{2.9})-(\ref{2.10}), and its coefficients  $\mu^{i}$ and $\delta^{i}$ ($i=1,2,\ldots,n-1$) are determined by the materials of the two objects in contact. Furthermore, it is easy to prove that 
formulation (\ref{2.7}) is equivalent to:
\begin{equation}\label{2.11}
\left\{\begin{aligned}
&\sigma_{N}^{i} \cdot\left[u_{N}^{i}\right]=0,~~\left|\boldsymbol{\sigma}_{T}^{i}\right| \leqslant g_{T}^{i}\left(\sigma_{N}^{i}\right) \\
&g_{T}^{i}\left(\sigma_{N}^{i}\right)\left|\left[\boldsymbol{u}_{T}^{i}\right]\right|+\boldsymbol{\sigma}_{T}^{i} \cdot \left[\boldsymbol{u}_{T}^{i}\right]=0
\end{aligned}\right. .
\end{equation}

\subsection{Variational inequality}
In order to derive the variational inequality of problem $P_{0}$, the space $\mathcal{K}$ needs to be defined as follows:
$$
\mathcal{K}=\left\{\boldsymbol{v} \in V : ~\left[v_{N}^{i}\right] \leqslant 0 \text {, a.e. on } \Gamma_{c}^{i},~ i=1,2,\ldots,n-1\right\} \subset V.
$$
Obviously, the the space $\mathcal{K}$ is the closed subspace of $V$.
Then, the elasticity operators $\mathcal{A}^{i}$ in problem $P_{0}$ are assumed to satisfy the following properties:
\begin{equation}\label{2.12}
\left\{\begin{aligned}
&\text { (a) } \mathcal{A}^{i}: \Omega^{i} \times \mathbb{S}^{d} \rightarrow \mathbb{S}^{d}; \\ 
&\text { (b) There exists } L_{A}^{i}>0 \text { such that } \\ 
&~~~~~~~~\left|\mathcal{A}^{i}\left(\boldsymbol{x}, \boldsymbol{\varepsilon}_{1}\right)-\mathcal{A}^{i}\left(\boldsymbol{x}, \boldsymbol{\varepsilon}_{2}\right)\right| \leqslant L_{A}^{i}\left|\boldsymbol{\varepsilon}_{1} - \boldsymbol{\varepsilon}_{2}\right|,  \\
&~~~~~~~~\forall \boldsymbol{\varepsilon}_{1}, \boldsymbol{\varepsilon}_{2} \in \mathbb{S}^{d} \text{ a.e. } \boldsymbol{x} \in \Omega^{i};  \\ 
&\text { (c) There exists } M^{i}>0 \text { such that } \\ 
&~~~~~~~~\left(\mathcal{A}^{i}\left(\boldsymbol{x}, \boldsymbol{\varepsilon}_{1}\right)-\mathcal{A}^{i}\left(\boldsymbol{x}, \boldsymbol{\varepsilon}_{2}\right)\right) : \left(\boldsymbol{\varepsilon}_{1} - \boldsymbol{\varepsilon}_{2}\right) \geqslant M^{i}\left|\boldsymbol{\varepsilon}_{1}-\boldsymbol{\varepsilon}_{2}\right|^{2},  \\
&~~~~~~~~\forall \boldsymbol{\varepsilon}_{1}, \boldsymbol{\varepsilon}_{2} \in \mathbb{S}^{d} \text { a.e. } \boldsymbol{x} \in \Omega^{i}; \\ 
&\text { (d) For any } \varepsilon \in \mathbb{S}^{d}, \boldsymbol{x} \mapsto \mathcal{A}^{i}(\boldsymbol{x}, \boldsymbol{\varepsilon})\\
&~~~~~~~\text { is Lebesgue measurable on } \Omega^{i}; \\ 
&\text { (e) The mapping } \boldsymbol{x} \mapsto \mathcal{A}^{i}(\boldsymbol{x}, \mathbf{0}) \in Q^{i}.
\end{aligned}\right.
\end{equation}
And the contact functions $g^{i}_{j}$ ($i=1,2,\cdots,n$; $j=N$ or $T$) satisfies the following constraints:
\begin{equation}\label{2.13}
\left\{\begin{aligned}
&\text { (a) } g^{i}_{j}: \Gamma_{3}^{i} \times \mathbb{R} \rightarrow \mathbb{R}_{+}; \\
&\text { (b) There exists an } L_{j}^{i}>0 \text { such that } \\
&~~~~~~~~\left|g^{i}_{j}\left(\boldsymbol{x}, u_{1}\right)-g^{i}_{j}\left(\boldsymbol{x}, u_{2}\right)\right| \leqslant L_{j}^{i}\left|u_{1}-u_{2}\right| \\ 
&~~~~~~~~\forall u_{1}, u_{2} \in \mathbb{R} \text { a.e. in } \Omega^{i}; \\
&\text { (c) For any } u \in \mathbb{R}, \boldsymbol{x} \mapsto g^{i}_{j}(\boldsymbol{x}, u) \text { is measurable}; \\
&\text { (d) The mapping } \boldsymbol{x} \mapsto g^{i}_{j}(\boldsymbol{x}, 0) \in L^{2}\left(\Gamma_{3}^{i}\right) \text {. }
\end{aligned}\right.
\end{equation}
Note that the constraints on the contact functions here are rather trivial, the strictest of which is that $g^{i}_{j}$ is globally Lipschitz continuous with respect to the second variable. In fact, limited by its own material properties, the stress that an elastic layer can withstand is often limited, so this condition can be relaxed to local Lipschitz continuity. Therefore, it is easy to verify that the contact equations (\ref{2.8})-(\ref{2.10}) all satisfy the condition (\ref{2.13}).

Furthermore, the body force $\boldsymbol{f}_{0}^{i}$ and surface traction force $\boldsymbol{f}_{2}$ of the layered elastic system are assumed to satisfy the following conditions:
\begin{equation}\label{2.14}
\boldsymbol{f}_{0}^{i} \in \left(L^{2}(\Omega^{i})\right)^{2},~ 
\boldsymbol{f}_{2} \in \left(L^{2}\left(\Gamma_{2}^{1}\right)\right)^{2}.
\end{equation}

Next, $L(\boldsymbol{v})$ is defined as:
\begin{equation}\label{2.15}
L(\boldsymbol{v})=\sum_{i=1}^{n} \int_{\Omega^{i}} \boldsymbol{f}_{0}^{i} \cdot \boldsymbol{v}^{i} d x + \int_{\Gamma_{2}^{1}} \boldsymbol{f}_{2} \cdot \boldsymbol{v}^{1} d a \triangleq (\boldsymbol{f}, \boldsymbol{v})_{V},
\end{equation}
$\forall \boldsymbol{v} \in V$. Obviously, $L(\boldsymbol{v})$ is a linear functional about $\boldsymbol{v}$, and because $V$ is a real Hilbert space, the Riesz representation theorem guarantees $\boldsymbol{f} \in V$. Let $j: V \times V \rightarrow \mathbb{R}$ be the functional
\begin{equation}\label{2.16}
\begin{aligned}
j(\boldsymbol{v}, \boldsymbol{w})=&
\int_{\Gamma_{3}^{n}} g_{N}^{n}\left(v_{\beta}^{n}\right) w_{\beta}^{n}d a
+\int_{\Gamma_{3}^{n}} g_{T}^{n}\left(v_{\beta}^{n}\right)|\boldsymbol{w}_{\eta}^{n}| d a &&\\
&+ \sum_{i=1}^{n-1}\int_{\Gamma_{c}^{i}} g_{T}^{i}\left(\sigma_{N}^{i}(\boldsymbol{v}^{i})\right)|[\boldsymbol{w}_{T}^{i}]| d a && \forall \boldsymbol{v}, \boldsymbol{w} \in V.
\end{aligned}
\end{equation}
Furthermore, the double-variable operator $a^{i}(\cdot,\cdot): V^{i}\times V^{i} \to \mathbb{R}$ ($i=1,2,\ldots,n$) and $a(\cdot,\cdot): V\times V \to \mathbb{R}$ are defined as:
\begin{align}
a^{i}(\boldsymbol{v}^{i}, \boldsymbol{w}^{i})&=\int_{\Omega^{i}} \mathcal{A}^{i} \boldsymbol{\varepsilon}(\boldsymbol{v}^{i}): \boldsymbol{\varepsilon}(\boldsymbol{w}^{i}) \mathrm{d} x \nonumber\\
&= \left(\mathcal{A}^{i} \boldsymbol{\varepsilon}(\boldsymbol{v}^{i}), \boldsymbol{\varepsilon}(\boldsymbol{w}^{i})\right)_{L^{2}\left(\Omega^{i}\right)},\nonumber\\
& \triangleq \left(A^{i}\boldsymbol{v}^{i}, \boldsymbol{w}^{i}\right)_{V}\label{2.17}\\ 
a(\boldsymbol{v}, \boldsymbol{w})&=\sum_{i=1}^{n} a^{i}(\boldsymbol{v}^{i}, \boldsymbol{w}^{i}),\nonumber\\
& \triangleq \left(A\boldsymbol{v}, \boldsymbol{w}\right)_{V}\label{2.18}
\end{align}
$\forall \boldsymbol{v}, \boldsymbol{w} \in V$. Note that $A^{i}:V^{i}\to V^{i}$ is a nonlinear operator defined by $\mathcal{A}^{i}$. Thus operator $a(\cdot,\cdot)$ is nonlinear with respect to the first variable and linear with respect to the second variable.

Based on Green's formula in tensor form, the solution to the displacement field of problem $P_{0}$ can be transformed into a solution to a variational inequality problem of the form:
\begin{prm}[$P_{1}$]
Find a displacement $\boldsymbol{u}\in \mathcal{K}$ which satisfies:
\begin{equation}\label{2.19}
    a(\boldsymbol{u},\boldsymbol{v}-\boldsymbol{u}) + j(\boldsymbol{u},\boldsymbol{v}) - j(\boldsymbol{u},\boldsymbol{u}) \geqslant L(\boldsymbol{v}-\boldsymbol{u}), ~\forall \boldsymbol{v} \in \mathcal{K}.
\end{equation}
\end{prm}
The specific derivation process of Problem $P_{0}$ to Problem $P_{1}$ is presented in the Appendix. Under the framework of variational inequalities, the constraints of the original problem $P_{0}$ are greatly simplified.

\section{The existence and uniqueness of solution}
Note that the variational inequality (\ref{2.19}) in problem $P_{1}$ is equivalent to:
\begin{equation}\label{3.1}
    (A\boldsymbol{u},\boldsymbol{v}-\boldsymbol{u})_{V} + j(\boldsymbol{u},\boldsymbol{v}) - j(\boldsymbol{u},\boldsymbol{u}) \geqslant (\boldsymbol{f},\boldsymbol{v}-\boldsymbol{u})_{V}, ~\forall \boldsymbol{v} \in \mathcal{K},
\end{equation}
which is a variant of the elliptic variational inequality. Therefore, under certain assumptions about operators $A$ and $j$, the existence of a unique solution to the variational inequality (\ref{3.1}) can be proved. Then it is only necessary to prove that operators $A$ and $j$ defined by operators $\mathcal{A}^{i}$ and $g_{r}^{i}$ satisfy these conditions.

First, we assume that $A:V \to V$ is a strongly monotonic and Lipschitz continuous operator, that is, it satisfies:
\begin{equation}\label{3.2}
\left.\begin{aligned}
&\text { (a) } \exists M>0 \text { such that } \\
&~~~~~~\left(A \boldsymbol{v}_{1}-A \boldsymbol{v}_{2}, \boldsymbol{v}_{1}-\boldsymbol{v}_{2}\right)_{V} \geqslant M\left\|\boldsymbol{v}_{1}-\boldsymbol{v}_{2}\right\|_{V}^{2},~ \forall \boldsymbol{v}_{1}, \boldsymbol{v}_{2} \in V ; \\
&\text { (b) } \exists L_{A}>0 \text { such that } \\
&~~~~~~\left\|A \boldsymbol{v}_{1}-A \boldsymbol{v}_{2}\right\|_{V} \leqslant L_{A}\left\|\boldsymbol{v}_{1}-\boldsymbol{v}_{2}\right\|_{V},~ \forall \boldsymbol{v}_{1}, \boldsymbol{v}_{2} \in V .
\end{aligned}\right\}
\end{equation}
Then the functional $j$ satisfies the following conditions:
\begin{equation}\label{3.3}
\left.\begin{aligned}
&\text { (a) } \forall \boldsymbol{w} \in V, j(\boldsymbol{w}, \cdot) \text { is convex and l.s.c on } V \text {; } \\
&\text { (b) } \exists m>0 \text { such that: } \\
&~~~~~~~j\left(\boldsymbol{w}_{1}, \boldsymbol{v}_{2}\right) - j\left(\boldsymbol{w}_{1}, \boldsymbol{v}_{1}\right) - \left( j\left(\boldsymbol{w}_{2}, \boldsymbol{v}_{2}\right) - j\left(\boldsymbol{w}_{2}, \boldsymbol{v}_{1}\right) \right)\\
&~~~~~~~\leqslant m\left\|\boldsymbol{w}_{1}-\boldsymbol{w}_{2}\right\|_{V} \left\|\boldsymbol{v}_{1}-\boldsymbol{v}_{2}\right\|_{V}, ~ \forall \boldsymbol{w}_{1}, \boldsymbol{w}_{2}, \boldsymbol{v}_{1}, \boldsymbol{v}_{2} \in V.
\end{aligned}\right\}
\end{equation}
where l.s.c represents lower semicontinuous.

Based on the above assumptions and Banach's fixed point theorem, which is often used to deal with similar problems\cite{han2000evolutionary}, we have the following theorem:
\begin{thm}
Let (\ref{3.2})-(\ref{3.3}) hold and $\boldsymbol{f}\in V$. Then, if $M>m$, there exists a unique solution $\boldsymbol{u}\in \mathcal{K}$ to the problem (\ref{3.1}).
\end{thm}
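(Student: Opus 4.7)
The plan is to freeze the nonlinear dependence of $j$ on its first argument and reduce the quasi-variational inequality (\ref{3.1}) to a fixed-point problem for a solution map defined by an auxiliary elliptic variational inequality of the second kind. This is the standard route for such problems and is essentially the strategy alluded to by the paper's reference to Banach's fixed point theorem and the elliptic variational inequality theory in \cite{han2000evolutionary, capatina2014variational}.

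Concretely, for each fixed $\boldsymbol{w} \in V$, I would first consider the auxiliary problem: find $\boldsymbol{u}_{\boldsymbol{w}} \in \mathcal{K}$ such that
\begin{equation*}
(A\boldsymbol{u}_{\boldsymbol{w}},\boldsymbol{v}-\boldsymbol{u}_{\boldsymbol{w}})_{V} + j(\boldsymbol{w},\boldsymbol{v}) - j(\boldsymbol{w},\boldsymbol{u}_{\boldsymbol{w}}) \geqslant (\boldsymbol{f},\boldsymbol{v}-\boldsymbol{u}_{\boldsymbol{w}})_{V}, \quad \forall \boldsymbol{v} \in \mathcal{K}.
\end{equation*}
Here the functional $j(\boldsymbol{w},\cdot)$ is, by (\ref{3.3})(a), convex and lower semicontinuous on $V$, while $A$ is strongly monotone and Lipschitz by (\ref{3.2}); $\mathcal{K}$ is a closed convex subset of the Hilbert space $V$. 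This is a classical elliptic variational inequality of the second kind, and the existence and uniqueness of $\boldsymbol{u}_{\boldsymbol{w}}$ follows directly from the standard theory (\cite{capatina2014variational}). This step is essentially an appeal to a known result and should not be the main difficulty.

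Next I would define the solution map $T : V \to \mathcal{K}$ by $T\boldsymbol{w} = \boldsymbol{u}_{\boldsymbol{w}}$ and show it is a contraction whenever $M > m$. Taking two elements $\boldsymbol{w}_{1},\boldsymbol{w}_{2} \in V$ with images $\boldsymbol{u}_{1} = T\boldsymbol{w}_{1}$, $\boldsymbol{u}_{2} = T\boldsymbol{w}_{2}$, I would write down the defining inequality for $\boldsymbol{u}_{1}$ with test function $\boldsymbol{v} = \boldsymbol{u}_{2}$, the inequality for $\boldsymbol{u}_{2}$ with test function $\boldsymbol{v} = \boldsymbol{u}_{1}$, and add them. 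The $(\boldsymbol{f},\cdot)_{V}$ terms cancel, and the remaining inequality reads
\begin{equation*}
(A\boldsymbol{u}_{1}-A\boldsymbol{u}_{2},\boldsymbol{u}_{1}-\boldsymbol{u}_{2})_{V} \leqslant \bigl[j(\boldsymbol{w}_{1},\boldsymbol{u}_{2})-j(\boldsymbol{w}_{1},\boldsymbol{u}_{1})\bigr] - \bigl[j(\boldsymbol{w}_{2},\boldsymbol{u}_{2})-j(\boldsymbol{w}_{2},\boldsymbol{u}_{1})\bigr].
\end{equation*}
Applying strong monotonicity on the left and assumption (\ref{3.3})(b) on the right yields
$M\|\boldsymbol{u}_{1}-\boldsymbol{u}_{2}\|_{V}^{2} \leqslant m\|\boldsymbol{w}_{1}-\boldsymbol{w}_{2}\|_{V}\|\boldsymbol{u}_{1}-\boldsymbol{u}_{2}\|_{V}$, hence $\|T\boldsymbol{w}_{1}-T\boldsymbol{w}_{2}\|_{V} \leqslant (m/M)\|\boldsymbol{w}_{1}-\boldsymbol{w}_{2}\|_{V}$, and the hypothesis $M > m$ gives a strict contraction.

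Finally, Banach's fixed point theorem applied to $T$ on the complete space $V$ (or equivalently on the closed set $\mathcal{K}$) produces a unique $\boldsymbol{u} \in \mathcal{K}$ with $T\boldsymbol{u}=\boldsymbol{u}$, which by construction satisfies (\ref{3.1}); uniqueness of the fixed point then yields uniqueness of the solution. The main obstacle, if any, is the clean derivation of the contraction estimate, which hinges on choosing the right cross test functions and on (\ref{3.3})(b) being formulated precisely so that the mixed $j$-difference is controlled bilinearly by $\|\boldsymbol{w}_{1}-\boldsymbol{w}_{2}\|_{V}\|\boldsymbol{v}_{1}-\boldsymbol{v}_{2}\|_{V}$; everything else is an appeal to established variational-inequality theory.
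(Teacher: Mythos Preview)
Your proposal is correct and follows essentially the same route as the paper: freeze the first argument of $j$, invoke the classical elliptic variational inequality theory for the auxiliary problem, and show that the resulting solution map is a contraction with ratio $m/M$ via cross test functions and Banach's fixed point theorem. The only cosmetic difference is that the paper defines the map on $\mathcal{K}$ rather than on $V$, which you yourself note is equivalent.
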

\begin{proof}
Let $\boldsymbol{p}\in \mathcal{K}$ be given. Consider the following variational inequality:
\begin{equation}\label{3.4}
    (A\boldsymbol{u},\boldsymbol{v}-\boldsymbol{u})_{V} + j(\boldsymbol{p},\boldsymbol{v}) - j(\boldsymbol{p},\boldsymbol{u}) \geqslant (\boldsymbol{f},\boldsymbol{v}-\boldsymbol{u})_{V}, ~\forall \boldsymbol{v} \in \mathcal{K},
\end{equation}
which has a unique solution according to the theory of elliptic variational fractional inequalities\cite{capatina2014variational}. For each $\boldsymbol{p}\in \mathcal{K}$, the operator $\Lambda: \mathcal{K}\to \mathcal{K}$ can be defined as:
\begin{equation}\label{3.5}
\Lambda \boldsymbol{p} = \boldsymbol{u}, ~ \forall \boldsymbol{p} \in \mathcal{K}.
\end{equation}
Then, we denote $\boldsymbol{u}_{1} = \Lambda \boldsymbol{p}_{1}$ and $\boldsymbol{u}_{2} = \Lambda \boldsymbol{p}_{2}$. Based on the elliptic variational inequality (\ref{3.4}), it is easy to deduce:
\begin{equation*}
\left(A \boldsymbol{u}_{1}-A \boldsymbol{u}_{2}, \boldsymbol{u}_{1} - \boldsymbol{u}_{2}\right) \leqslant j\left(\boldsymbol{p}_{2}, \boldsymbol{u}_{1}\right) - j\left(\boldsymbol{p}_{2}, \boldsymbol{u}_{2}\right) - \left[j\left(\boldsymbol{p}_{1}, \boldsymbol{u}_{1}\right) - j\left(\boldsymbol{p}_{1}, \boldsymbol{u}_{2}\right)\right]
\end{equation*}
Then, using properties (\ref{3.2}) and (\ref{3.3}), it is easy to obtain:
\begin{equation}
\begin{aligned}
&M\left\|\boldsymbol{u}_{1} - \boldsymbol{u}_{2}\right\|_{V} \leqslant m\left\|\boldsymbol{p}_{1}-\boldsymbol{p}_{2}\right\|_{V}\\ 
\Rightarrow &\left\|\Lambda\boldsymbol{p}_{1} - \Lambda\boldsymbol{p}_{2}\right\|_{V} \leqslant \frac{m}{M}\left\|\boldsymbol{p}_{1}-\boldsymbol{p}_{2}\right\|_{V}.
\end{aligned}
\end{equation}
Therefore, when $M>m$, the operator $\Lambda$ is a condensing map in the Banach space $\mathcal{K}$. Based on Banach's fixed point theorem, there exists a unique $\boldsymbol{u}^{*}$ such that the following holds:
\begin{equation*}
\Lambda \boldsymbol{u}^{*} = \boldsymbol{u}^{*}, ~ \forall \boldsymbol{p} \in \mathcal{K}.
\end{equation*}
Taking $\boldsymbol{p}=\boldsymbol{u}^{*}$ in (\ref{3.4}), we have:
\begin{equation*}
(A \boldsymbol{u}^{*}, \boldsymbol{v}-\boldsymbol{u}^{*})_{V}+j(\boldsymbol{u}^{*}, \boldsymbol{v})-j(\boldsymbol{u}^{*}, \boldsymbol{u}^{*}) \geqslant(\boldsymbol{f}, \boldsymbol{v}-\boldsymbol{u}^{*})_{V}, \forall \boldsymbol{v} \in \mathcal{K}.
\end{equation*}
That is, $\boldsymbol{u}^{*}$ is the solution of the variational inequality \ref{3.1}. The uniqueness of the solution of the elliptic variational inequality and the Banach fixed point theorem guarantee the uniqueness of $\boldsymbol{u}^{*}$.
\end{proof}

Then, in order to prove the existence and uniqueness of the solution to problem $P_{1}$, it is only necessary to verify that the operator $A$ defined by equation (\ref{2.18}) and the functional $j(\cdot,\cdot)$ defined by equation (\ref{2.16}) satisfy conditions (\ref{3.2}) and (\ref{3.3}), respectively. From this, the following theorem holds:
\begin{thm}
Assume that (\ref{2.12})-(\ref{2.14}) hold. Then there exists $M>0$ and $m>0$, which depend only on $\Omega^{i}$, $\Gamma^{i}$, $g^{i}_{j}$ and $\mathcal{A}^{i}$ ($j=N$ or $T$, $i=1,2,\ldots,n$), such that the problem $P_{1}$ has a unique solution $\boldsymbol{u}\in \mathcal{K}$ if $M>m$.
\end{thm}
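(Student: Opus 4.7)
The plan is to reduce Theorem 3.2 to the abstract existence result of Theorem 3.1 by verifying that the operator $A$ defined in (\ref{2.18}) satisfies (\ref{3.2}) and the functional $j$ defined in (\ref{2.16}) satisfies (\ref{3.3}), with both constants $M$ and $m$ identified explicitly in terms of the ingredients of the model. Once these two verifications are in place, the constant $M$ will come from the ellipticity constants $M^i$ of the elasticity operators, while $m$ will aggregate the Lipschitz constants $L_N^n, L_T^i$, the trace constants $c_t^i$ of (\ref{2.0}), the inverse-trace constants $c_h^i$ of (\ref{2.01}), and the Lipschitz constants $L_A^i$ of $\mathcal{A}^i$. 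The hypothesis $M > m$ in the statement is then just the compatibility condition carried over from Theorem 3.1.

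To check (\ref{3.2}), I would work block-wise, writing $(A\boldsymbol{v}_1 - A\boldsymbol{v}_2, \boldsymbol{w})_V = \sum_i \bigl(\mathcal{A}^i \boldsymbol{\varepsilon}(\boldsymbol{v}_1^i) - \mathcal{A}^i \boldsymbol{\varepsilon}(\boldsymbol{v}_2^i), \boldsymbol{\varepsilon}(\boldsymbol{w}^i)\bigr)_{L^2(\Omega^i)}$. Assumption (\ref{2.12})(c) applied in each layer, together with the definition of $\|\cdot\|_{V^i}$ via the strain inner product, immediately yields strong monotonicity with $M = \min_i M^i$. Lipschitz continuity comes from (\ref{2.12})(b), Cauchy--Schwarz and the same identification of norms, producing $L_A = \max_i L_A^i$. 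Measurability and the $\mathcal{A}^i(\cdot, \mathbf{0}) \in Q^i$ hypothesis guarantee $A$ is well-defined as a map $V \to V$.

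For (\ref{3.3})(a) I would separate $j(\boldsymbol{w}, \cdot)$ into three types of terms: a linear term in $w_\beta^n$ (the normal compliance piece), and two terms of the form (nonnegative coefficient) times $|\boldsymbol{w}_\eta^n|$ or $|[\boldsymbol{w}_T^i]|$. Linearity, respectively convexity of the norm composed with a bounded linear jump/trace operator, gives convexity. Lower semicontinuity on $V$ follows from continuity of the trace operator $V^i \to L^2(\Gamma_3^i)^d$ together with Fatou's lemma applied to the nonnegative integrands, using (\ref{2.13})(d) to ensure the Lipschitz extension of $g_j^i$ produces $L^2$ functions.

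The main obstacle, and the place where the hypothesis (\ref{2.01}) is essential, is condition (\ref{3.3})(b). After a direct rearrangement one obtains
\begin{align*}
&j(\boldsymbol{w}_1, \boldsymbol{v}_2) - j(\boldsymbol{w}_1, \boldsymbol{v}_1) - j(\boldsymbol{w}_2, \boldsymbol{v}_2) + j(\boldsymbol{w}_2, \boldsymbol{v}_1) \\
&\quad = \int_{\Gamma_3^n}\bigl[g_N^n(w_{1,\beta}^n) - g_N^n(w_{2,\beta}^n)\bigr](v_{2,\beta}^n - v_{1,\beta}^n)\,da \\
&\qquad + \int_{\Gamma_3^n}\bigl[g_T^n(w_{1,\beta}^n) - g_T^n(w_{2,\beta}^n)\bigr]\bigl(|\boldsymbol{v}_{2,\eta}^n| - |\boldsymbol{v}_{1,\eta}^n|\bigr)\,da \\
&\qquad + \sum_{i=1}^{n-1}\int_{\Gamma_c^i}\bigl[g_T^i(\sigma_N^i(\boldsymbol{w}_1^i)) - g_T^i(\sigma_N^i(\boldsymbol{w}_2^i))\bigr]\bigl(|[\boldsymbol{v}_{2,T}^i]| - |[\boldsymbol{v}_{1,T}^i]|\bigr)\,da.
\end{align*}
The first two lines are controlled by Cauchy--Schwarz, the Lipschitz bound (\ref{2.13})(b) on $g_N^n, g_T^n$, and the trace inequality (\ref{2.0}), giving contributions to $m$ of the form $L_j^n (c_t^n)^2$. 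The third line is the delicate one: the argument of $g_T^i$ is $\sigma_N^i(\boldsymbol{w}^i) = (\mathcal{A}^i \boldsymbol{\varepsilon}(\boldsymbol{w}^i))\boldsymbol{\beta}^i \cdot \boldsymbol{\beta}^i$, so its $L^2(\Gamma_c^i)$ norm must be controlled by $\|\boldsymbol{w}_1^i - \boldsymbol{w}_2^i\|_{V^i}$. Here I would use, in order, the Lipschitz constant $L_T^i$ of $g_T^i$, the Lipschitz bound (\ref{2.12})(b) on $\mathcal{A}^i$ pointwise, and then the inverse/trace estimate (\ref{2.01}) to pass from the boundary $L^2$ norm of $\boldsymbol{\varepsilon}(\boldsymbol{w}_1^i - \boldsymbol{w}_2^i)$ on $\Gamma_c^i$ to its full $L^2(\Omega^i)$ norm, which is exactly $\|\boldsymbol{w}_1^i - \boldsymbol{w}_2^i\|_{V^i}$. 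Combined with the trace inequality applied to $[\boldsymbol{v}_T^i]$, this produces a contribution of order $L_T^i L_A^i c_h^i c_t^i c_t^{i+1}$. Summing all contributions over $i$ yields the constant $m$, and the conclusion follows by applying Theorem 3.1 in the regime $M > m$.
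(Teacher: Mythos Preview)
Your proposal is correct and follows essentially the same route as the paper: reduce to Theorem 3.1 by verifying (\ref{3.2}) block-wise with $M=\min_i M^i$, and verifying (\ref{3.3})(b) by decomposing $j$ into the normal-compliance, foundation-friction, and interlayer-friction terms, handling the latter via the Lipschitz bounds on $g_T^i$ and $\mathcal{A}^i$ together with the inverse-trace estimate (\ref{2.01}). The only minor discrepancies are in the exact form of the constants (e.g.\ you correctly pick up $(c_t^n)^2$ rather than the paper's $c_t^n$, and your $c_t^i c_t^{i+1}$ for the jump term should more precisely be an additive combination like $c_t^i + c_t^{i+1}$), but these do not affect the argument.
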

\begin{proof}
According to (\ref{2.17}), (\ref{2.18}) and (\ref{2.12}.c), the following inequality relation holds:
\begin{equation*}
\begin{aligned}
&\left(A \boldsymbol{u}_{1} - A \boldsymbol{u}_{2}, \boldsymbol{u}_{1} - \boldsymbol{u}_{2}\right)_{V} \\ =&\sum_{i=1}^{n} \int_{\Omega^{i}} \left( \mathcal{A}^{i} \left(\boldsymbol{\varepsilon}\left(\boldsymbol{u}_{1}^{i}\right)\right) - \mathcal{A}^{i} \left(\boldsymbol{\varepsilon}\left(\boldsymbol{u}_{2}^{i}\right)\right) \right): \left(\boldsymbol{\varepsilon}\left(\boldsymbol{u}_{1}^{i}\right) - \boldsymbol{\varepsilon}\left(\boldsymbol{u}_{2}^{i}\right)\right) d x \\
\geqslant & \sum_{i=1}^{m} M^{i}\left\|\boldsymbol{\varepsilon}\left(\boldsymbol{u}_{1}^{i}\right) - \boldsymbol{\varepsilon}\left(\boldsymbol{u}_{2}^{i}\right)\right\|^{2}_{L^{2}\left(\Omega^{i}\right)^{d}}\\
\geqslant & M\|\boldsymbol{u}_{1} - \boldsymbol{u}_{2}\|^{2}_{V}
\end{aligned}
\end{equation*}
where $M=\min(M^{1},M^{2},\ldots,M^{n})$. Therefore, $A$ satisfies strong monotonicity. And the Lipschitz continuity of operator $\mathcal{A}^{i}$ for the second variable also ensures that operator $A$ is Lipschitz continuous. Hence, it is proved that $A$ satisfies the condition (\ref{3.2}).

According to the formula (\ref{2.16}), it is easy to verify that $j(\cdot,\cdot)$ satisfies the condition (\ref{3.3}(a)), and it is only necessary to verify that the condition (\ref{3.3}(b)) holds. Substitute (\ref{2.16}) into (\ref{3.3}(b)) to obtain the following relationship:
\begin{align}\label{3.7}
& j\left(\boldsymbol{w}_{1}, \boldsymbol{v}_{2}\right) - j\left(\boldsymbol{w}_{1}, \boldsymbol{v}_{1}\right) - \left( j\left(\boldsymbol{w}_{2}, \boldsymbol{v}_{2}\right) - j\left(\boldsymbol{w}_{2}, \boldsymbol{v}_{1}\right) \right) \nonumber\\
=& \int_{\Gamma_{3}^{n}} g_{N}^{n}\left(w_{1\beta}^{n}\right) v_{2\beta}^{n} - g_{N}^{n}\left(w_{1\beta}^{n}\right) v_{1\beta}^{n} - \left(g_{N}^{n}\left(w_{2\beta}^{n}\right) v_{2\beta}^{n} - g_{N}^{n}\left(w_{2\beta}^{n}\right) v_{1\beta}^{n} \right) da \nonumber\\ 
& + \int_{\Gamma_{3}^{n}} g_{T}^{n}\left(w_{1\beta}^{n}\right)|\boldsymbol{v}_{2\eta}^{i}| - g_{T}^{n}\left(w_{1\beta}^{n}\right)|\boldsymbol{v}_{1\eta}^{i}| - \left( g_{T}^{n}\left(w_{2\beta}^{n}\right)|\boldsymbol{v}_{2\eta}^{i}| - g_{T}^{n}\left(w_{2\beta}^{n}\right)|\boldsymbol{v}_{1\eta}^{i}| \right) d a \nonumber\\
& + \sum_{i=1}^{n-1}\int_{\Gamma_{c}^{i}} g_{T}^{i}\left(\sigma_{N}^{i}(\boldsymbol{w}^{i}_{1})\right)|[\boldsymbol{v}_{2T}^{i}]| - g_{T}^{i}\left(\sigma_{N}^{i}(\boldsymbol{w}^{i}_{1})\right)|[\boldsymbol{v}_{1T}^{i}]| \nonumber\\ 
& - \left( g_{T}^{i}\left(\sigma_{N}^{i}(\boldsymbol{w}^{i}_{2})\right)|[\boldsymbol{v}_{2T}^{i}]| - g_{T}^{i}\left(\sigma_{N}^{i}(\boldsymbol{w}^{i}_{2})\right)|[\boldsymbol{v}_{1T}^{i}]| \right) d a.
\end{align}
Based on (\ref{2.0}) and (\ref{2.13}(b)), the first term on the right-hand side of (\ref{3.7}) satisfies:
\begin{align}\label{3.8}
&\int_{\Gamma_{3}^{n}} g_{N}^{n}\left(w_{1\beta}^{n}\right) v_{2\beta}^{n} - g_{N}^{n}\left(w_{1\beta}^{n}\right) v_{1\beta}^{n} - \left(g_{N}^{n}\left(w_{2\beta}^{n}\right) v_{2\beta}^{n} - g_{N}^{n}\left(w_{2\beta}^{n}\right) v_{1\beta}^{n} \right) da \nonumber\\
\leqslant & \int_{\Gamma_{3}^{n}} |g_{N}^{n}\left(w_{1\beta}^{n}\right) -  g_{N}^{n}\left(w_{2\beta}^{n}\right)|\cdot |v_{2\beta}^{n} - v_{1\beta}^{n}| da \nonumber\\
\leqslant & L_{N}^{n} \int_{\Gamma_{3}^{n}} |w_{1\beta}^{n} - w_{2\beta}^{n}|\cdot |v_{2\beta}^{n} - v_{1\beta}^{n}| da \nonumber\\
\leqslant & L_{N}^{n} c_{t}^{n} \|\boldsymbol{w}_{1}^{n} - \boldsymbol{w}_{2}^{n}\|_{V^{n}} \cdot \|\boldsymbol{v}_{1}^{n} - \boldsymbol{v}_{2}^{n}\|_{V^{n}}.
\end{align}
Similarly, it is easy to prove that the second term on the right-hand side of (\ref{3.7}) satisfies:
\begin{align}\label{3.9}
& \int_{\Gamma_{3}^{n}} g_{T}^{n}\left(w_{1\beta}^{n}\right)|\boldsymbol{v}_{2\eta}^{i}| - g_{T}^{n}\left(w_{1\beta}^{n}\right)|\boldsymbol{v}_{1\eta}^{i}| - \left( g_{T}^{n}\left(w_{2\beta}^{n}\right)|\boldsymbol{v}_{2\eta}^{i}| - g_{T}^{n}\left(w_{2\beta}^{n}\right)|\boldsymbol{v}_{1\eta}^{i}| \right) d a \nonumber\\
\leqslant & L_{T}^{n} c_{t}^{n} \|\boldsymbol{w}_{1}^{n} - \boldsymbol{w}_{2}^{n}\|_{V^{n}} \cdot \|\boldsymbol{v}_{1}^{n} - \boldsymbol{v}_{2}^{n}\|_{V^{n}}.
\end{align}
Then, it is easy to deduce that the third term on the right-hand side of equation (\ref{3.7}) satisfies:
\begin{align}\label{3.10}
& \sum_{i=1}^{n-1}\int_{\Gamma_{c}^{i}} g_{T}^{i}\left(\sigma_{N}^{i}(\boldsymbol{w}^{i}_{1})\right)|[\boldsymbol{v}_{2T}^{i}]| - g_{T}^{i}\left(\sigma_{N}^{i}(\boldsymbol{w}^{i}_{1})\right)|[\boldsymbol{v}_{1T}^{i}]| \nonumber\\ 
& - \left( g_{T}^{i}\left(\sigma_{N}^{i}(\boldsymbol{w}^{i}_{2})\right)|[\boldsymbol{v}_{2T}^{i}]| - g_{T}^{i}\left(\sigma_{N}^{i}(\boldsymbol{w}^{i}_{2})\right)|[\boldsymbol{v}_{1T}^{i}]| \right) d a \nonumber\\ 
\leqslant & \sum_{i=1}^{n-1} L_{T}^{i} \int_{\Gamma_{c}^{i}} \left|\sigma_{N}^{i}(\boldsymbol{w}^{i}_{1}) - \sigma_{N}^{i}(\boldsymbol{w}^{i}_{2})\right|\cdot \left||[\boldsymbol{v}_{1T}^{i}]| - |[\boldsymbol{v}_{2T}^{i}]|\right| da .
\end{align}
According to (\ref{2.12}(b)), the following inequality relation holds:
\begin{align*}
\left|\sigma_{N}^{i}(\boldsymbol{w}^{i}_{1}) - \sigma_{N}^{i}(\boldsymbol{w}^{i}_{2})\right| &\leqslant 
\left|\mathcal{A}^{i}\left(\boldsymbol{\varepsilon}\left(\boldsymbol{w}_{1}^{i}\right)\right)-\mathcal{A}^{i}\left(\boldsymbol{\varepsilon}\left(\boldsymbol{w}_{2}^{i}\right)\right)\right|\\
& \leqslant L_{A}^{i} \left| \boldsymbol{\varepsilon}\left(\boldsymbol{w}_{1}^{i}\right) - \boldsymbol{\varepsilon}\left(\boldsymbol{w}_{2}^{i}\right) \right|.
\end{align*}
In addition, based on the definition of the jump operator, the following relationship can be derived:
\begin{align*}
&\left||[\boldsymbol{v}_{1T}^{i}]| - |[\boldsymbol{v}_{2T}^{i}]|\right| \leqslant \left|[\boldsymbol{v}_{1T}^{i}] - [\boldsymbol{v}_{2T}^{i}]\right| = \left|(\boldsymbol{v}_{1\eta}^{i} - \boldsymbol{v}_{2\eta}^{i}) - (\boldsymbol{v}_{1\tau}^{i+1} - \boldsymbol{v}_{2\tau}^{i+1})\right|\\
\leqslant & \left|\boldsymbol{v}_{1\eta}^{i} - \boldsymbol{v}_{2\eta}^{i}\right| + \left| \boldsymbol{v}_{1\tau}^{i+1} - \boldsymbol{v}_{2\tau}^{i+1}\right| \leqslant \left|\boldsymbol{v}_{1}^{i} - \boldsymbol{v}_{2}^{i}\right| + \left| \boldsymbol{v}_{1}^{i+1} - \boldsymbol{v}_{2}^{i+1}\right|
\end{align*}
Based on the above two inequalities, inequality (\ref{2.0})-(\ref{2.01}) and Hölder inequality, equation (\ref{3.10}) satisfies the following inequality relation:
\begin{align}\label{3.11}
& \sum_{i=1}^{n-1}\int_{\Gamma_{c}^{i}} g_{T}^{i}\left(\sigma_{N}^{i}(\boldsymbol{w}^{i}_{1})\right)|[\boldsymbol{v}_{2T}^{i}]| - g_{T}^{i}\left(\sigma_{N}^{i}(\boldsymbol{w}^{i}_{1})\right)|[\boldsymbol{v}_{1T}^{i}]| \nonumber\\ 
& - \left( g_{T}^{i}\left(\sigma_{N}^{i}(\boldsymbol{w}^{i}_{2})\right)|[\boldsymbol{v}_{2T}^{i}]| - g_{T}^{i}\left(\sigma_{N}^{i}(\boldsymbol{w}^{i}_{2})\right)|[\boldsymbol{v}_{1T}^{i}]| \right) d a \nonumber\\ 
\leqslant & \sum_{i=1}^{n-1} L_{T}^{i} L_{A}^{i} \int_{\Gamma_{c}^{i}} \left| \boldsymbol{\varepsilon}\left(\boldsymbol{w}_{1}^{i}\right) - \boldsymbol{\varepsilon}\left(\boldsymbol{w}_{2}^{i}\right) \right| \cdot \left(\left|\boldsymbol{v}_{1}^{i} - \boldsymbol{v}_{2}^{i}\right| + \left| \boldsymbol{v}_{1}^{i+1} - \boldsymbol{v}_{2}^{i+1}\right| \right) da \nonumber\\
\leqslant & \sum_{i=1}^{n} L_{T}^{i} L_{A}^{i} c_{h}^{i} c_{t}^{i} \| \boldsymbol{w}_{1}^{i} - \boldsymbol{w}_{2}^{i}\|_{V^{i}} \cdot \| \boldsymbol{v}_{1}^{i} - \boldsymbol{v}_{2}^{i}\|_{V^{i}}.
\end{align}
Finally, by substituting inequalities (\ref{3.8}), (\ref{3.9}) and (\ref{3.11}) into (\ref{3.7}), the following inequalities hold:
\begin{align}
& j\left(\boldsymbol{w}_{1}, \boldsymbol{v}_{2}\right) - j\left(\boldsymbol{w}_{1}, \boldsymbol{v}_{1}\right) - \left( j\left(\boldsymbol{w}_{2}, \boldsymbol{v}_{2}\right) - j\left(\boldsymbol{w}_{2}, \boldsymbol{v}_{1}\right) \right) \nonumber\\
\leqslant & \sum_{i=1}^{n-1} L_{T}^{i} L_{A}^{i} c_{h}^{i} c_{t}^{i} \| \boldsymbol{w}_{1}^{i} - \boldsymbol{w}_{2}^{i}\|_{V^{i}} \cdot \| \boldsymbol{v}_{1}^{i} - \boldsymbol{v}_{2}^{i}\|_{V^{i}} \nonumber \\ 
&+ \left(L_{N}^{n} c_{t}^{n} + L_{T}^{n} c_{t}^{n} + L_{T}^{n} L_{A}^{n} c_{h}^{n} c_{t}^{n}\right) \cdot \|\boldsymbol{w}_{1}^{n} - \boldsymbol{w}_{2}^{n}\|_{V^{n}} \cdot \|\boldsymbol{v}_{1}^{n} - \boldsymbol{v}_{2}^{n}\|_{V^{n}} \nonumber\\
\leqslant & m \cdot \|\boldsymbol{w}_{1} - \boldsymbol{w}_{2}\|_{V} \cdot \|\boldsymbol{v}_{1} - \boldsymbol{v}_{2}\|_{V},
\end{align}
where 
$$
m = \max(L_{T}^{1} L_{A}^{1} c_{h}^{1} c_{t}^{1}, \ldots, L_{T}^{n-1} L_{A}^{n-1} c_{h}^{n-1} c_{t}^{n-1}, L_{N}^{n} c_{t}^{n} + L_{T}^{n} c_{t}^{n} + L_{T}^{n} L_{A}^{n} c_{h}^{n} c_{t}^{n}).
$$

Therefore, the operator $j(\cdot,\cdot)$ satisfies the condition (\ref{3.3}), and then applying Theorem 3.1, Theorem 3.2 is proved.
\end{proof}

Hence, the existence and uniqueness theorem of the solution to problem $P_{1}$ is proved. 
\begin{rem}
It is worth noting that whether the critical condition $M>m$ holds will depend on the parameters of the specific physical model. In particular, when $L_{j}^{i}$ is sufficiently small in the friction condition (\ref{2.13}), $M>m$ can always be guaranteed, that is, the solution exists and is unique. Therefore, when the problem $P_{0}$ has a unique solution, $P_{0}$ and $P_{1}$ must be equivalent, which means that the variational inequality can simplify the constraints without losing the constraint information of the original system of equations.
\end{rem}

\section{Approximation and Convergence}
After the existence and uniqueness of the solution of problem $P_{1}$ has been proved, the approximate solution obtained by the finite element method will be studied in this section. Then, after the approximate relation is obtained, the conditions under which the discrete approximate solution converges to the true solution will also be analyzed.

\subsection{Discrete approximation}

Let $\mathcal{K}^{h}\subset\mathcal{K}$ be the finite-dimensional function space constructed by the finite element method. Then the approximation problem in space $\mathcal{K}^{h}$ is described as follows:
\begin{prm}[$P_{1}^{h}$]
Find a displacement $\boldsymbol{u}^{h}\in \mathcal{K}^{h}$ which satisfies:
\begin{equation}\label{4.1}
    (A\boldsymbol{u}^{h},\boldsymbol{v}^{h}-\boldsymbol{u}^{h})_{V} + j(\boldsymbol{u}^{h},\boldsymbol{v}^{h}) - j(\boldsymbol{u}^{h},\boldsymbol{u}^{h}) \geqslant (\boldsymbol{f},\boldsymbol{v}^{h}-\boldsymbol{u}^{h})_{V}, ~\forall \boldsymbol{v}^{h} \in \mathcal{K}^{h},
\end{equation}
\end{prm}
Similar to the proof of Theorem 3.2 in Section 3, it is easy to prove that there is a unique solution $\boldsymbol{u}^{h}\in \mathcal{K}^{h}$ to problem $P_{1}^{h}$ under certain conditions. 
Then, the main purpose of this subsection is to discuss the error between the approximate solution and the true solution: $\boldsymbol{u} - \boldsymbol{u}^{h}$.

Due to $\mathcal{K}^{h}\subset \mathcal{K}$, by taking $\boldsymbol{v}=\boldsymbol{u}^{h}$ in inequality (\ref{3.1}), it can be obtained:
\begin{equation}\label{4.2}
(A\boldsymbol{u},\boldsymbol{u}^{h}-\boldsymbol{u})_{V} + j(\boldsymbol{u},\boldsymbol{u}^{h}) - j(\boldsymbol{u},\boldsymbol{u}) \geqslant (\boldsymbol{f},\boldsymbol{u}^{h}-\boldsymbol{u})_{V}.
\end{equation}
Then, after summing and manipulating the inequalities (\ref{4.1}) and (\ref{4.2}), the following inequalities can be obtained:
\begin{align}\label{4.3}
(A\boldsymbol{u} - A\boldsymbol{u}^{h} , \boldsymbol{u} -  \boldsymbol{u}^{h})_{V} \leqslant & (A\boldsymbol{u}^{h},\boldsymbol{v}^{h} -\boldsymbol{u})_{V} + (\boldsymbol{f},\boldsymbol{u} - \boldsymbol{v}^{h} )_{V} \nonumber\\ 
&+ j(\boldsymbol{u},\boldsymbol{u}^{h}) - j(\boldsymbol{u},\boldsymbol{u}) + j(\boldsymbol{u}^{h},\boldsymbol{v}^{h}) - j(\boldsymbol{u}^{h},\boldsymbol{u}^{h})\nonumber\\ 
\leqslant & (A\boldsymbol{u}^{h} - A\boldsymbol{u} ,\boldsymbol{v}^{h} -\boldsymbol{u})_{V} + j(\boldsymbol{u},\boldsymbol{u}^{h}) - j(\boldsymbol{u},\boldsymbol{v}^{h}) \nonumber \\
&- \left[j(\boldsymbol{u}^{h},\boldsymbol{u}^{h}) - j(\boldsymbol{u}^{h},\boldsymbol{v}^{h}) \right] + R(\boldsymbol{u} ,\boldsymbol{v}^{h}),
\end{align}
where 
\begin{equation}\label{4.4}
R(\boldsymbol{u},\boldsymbol{v}^{h}) = (A\boldsymbol{u},\boldsymbol{v}^{h} -\boldsymbol{u})_{V} + (\boldsymbol{f},\boldsymbol{u} - \boldsymbol{v}^{h} )_{V} - j(\boldsymbol{u},\boldsymbol{u}) + j(\boldsymbol{u},\boldsymbol{v}^{h}).
\end{equation}
Using conditions (\ref{3.2}) and (\ref{3.3}), it can be deduced form (\ref{4.3}) that:
\begin{align*}
& M\|\boldsymbol{u} -\boldsymbol{u}^{h}\|_{V}^{2} \\ \leqslant & L_{A} \|\boldsymbol{u} -\boldsymbol{u}^{h}\|_{V} \cdot \|\boldsymbol{u} -\boldsymbol{v}^{h}\|_{V} + m \|\boldsymbol{u} -\boldsymbol{u}^{h}\|_{V} \cdot \|\boldsymbol{v}^{h} -\boldsymbol{u}^{h}\|_{V} + |R(\boldsymbol{u},\boldsymbol{v}^{h})|\\
\leqslant & (L_{A} + m) \|\boldsymbol{u} -\boldsymbol{u}^{h}\|_{V} \cdot \|\boldsymbol{u} -\boldsymbol{v}^{h}\|_{V} + m \|\boldsymbol{u} -\boldsymbol{u}^{h}\|_{V}^{2} + |R(\boldsymbol{u},\boldsymbol{v}^{h})|.
\end{align*}
Based on the condition $M>m$, the following inequality can be deduced from the above formula:
\begin{align*}
\|\boldsymbol{u} -\boldsymbol{u}^{h}\|_{V} \leqslant c\left( \|\boldsymbol{u} -\boldsymbol{v}^{h}\|_{V} + |R(\boldsymbol{u},\boldsymbol{v}^{h})|^{\frac{1}{2}} \right).
\end{align*}
Note that this inequality holds for any $\boldsymbol{v}^{h} \in \mathcal{K}^{h}$, so it can be summarized as the following theorem:
\begin{thm}
Assume the conditions (\ref{3.2})-(\ref{3.3}) hold, $\boldsymbol{f}\in V$ and $M>m$. Then, the error between the solution $\boldsymbol{u}$ of problem $P_{1}$ and the solution $\boldsymbol{u}^{h}$ of problem $P_{1}^{h}$ has the following estimation: 
\begin{align}\label{4.5}
\|\boldsymbol{u} -\boldsymbol{u}^{h}\|_{V} \leqslant c \inf_{\boldsymbol{v}^{h}\in \mathcal{K}^{h}}\left( \|\boldsymbol{u} -\boldsymbol{v}^{h}\|_{V} + |R(\boldsymbol{u},\boldsymbol{v}^{h})|^{\frac{1}{2}} \right),
\end{align}
where $|R(\boldsymbol{u},\boldsymbol{v}^{h})|$ is defined in (\ref{4.4}).
\end{thm}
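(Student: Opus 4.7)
The plan is a Céa-type estimate: exploit that $\mathcal{K}^h\subset\mathcal{K}$, so every discrete test function is admissible in the continuous inequality, and combine the two variational inequalities to isolate $\|\boldsymbol{u}-\boldsymbol{u}^h\|_V$. First I would substitute $\boldsymbol{v}=\boldsymbol{u}^h$ into (\ref{3.1}) to obtain the analogue (\ref{4.2}), then add (\ref{4.1}) to (\ref{4.2}) after moving the appropriate terms. The critical algebraic trick is to insert $\pm A\boldsymbol{u}$ and $\pm j(\boldsymbol{u}^h,\boldsymbol{v}^h),\pm j(\boldsymbol{u},\boldsymbol{v}^h)$ so that the resulting right-hand side naturally decomposes as a term in $A\boldsymbol{u}^h-A\boldsymbol{u}$ tested against $\boldsymbol{v}^h-\boldsymbol{u}$, a term of the shape $j(\boldsymbol{u},\boldsymbol{u}^h)-j(\boldsymbol{u},\boldsymbol{v}^h)-[j(\boldsymbol{u}^h,\boldsymbol{u}^h)-j(\boldsymbol{u}^h,\boldsymbol{v}^h)]$, and the residual $R(\boldsymbol{u},\boldsymbol{v}^h)$ from (\ref{4.4}).

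Next, I would apply the structural hypotheses established in Section~3. On the left, strong monotonicity (\ref{3.2}(a)) gives $(A\boldsymbol{u}-A\boldsymbol{u}^h,\boldsymbol{u}-\boldsymbol{u}^h)_V\geqslant M\|\boldsymbol{u}-\boldsymbol{u}^h\|_V^2$. On the right, Lipschitz continuity (\ref{3.2}(b)) controls the $A$-term by $L_A\|\boldsymbol{u}-\boldsymbol{u}^h\|_V\|\boldsymbol{u}-\boldsymbol{v}^h\|_V$, while property (\ref{3.3}(b)) bounds the mixed $j$-difference by $m\|\boldsymbol{u}-\boldsymbol{u}^h\|_V\|\boldsymbol{v}^h-\boldsymbol{u}^h\|_V$. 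A triangle-inequality step $\|\boldsymbol{v}^h-\boldsymbol{u}^h\|_V\leqslant\|\boldsymbol{v}^h-\boldsymbol{u}\|_V+\|\boldsymbol{u}-\boldsymbol{u}^h\|_V$ produces precisely the intermediate bound
\[
M\|\boldsymbol{u}-\boldsymbol{u}^h\|_V^2 \leqslant (L_A+m)\|\boldsymbol{u}-\boldsymbol{u}^h\|_V\,\|\boldsymbol{u}-\boldsymbol{v}^h\|_V + m\|\boldsymbol{u}-\boldsymbol{u}^h\|_V^2 + |R(\boldsymbol{u},\boldsymbol{v}^h)|.
\]

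Now the coercivity margin $M>m$ lets me absorb the $m\|\boldsymbol{u}-\boldsymbol{u}^h\|_V^2$ on the right into the left, leaving $(M-m)\|\boldsymbol{u}-\boldsymbol{u}^h\|_V^2$ bounded by a term linear in $\|\boldsymbol{u}-\boldsymbol{u}^h\|_V$ plus $|R(\boldsymbol{u},\boldsymbol{v}^h)|$. An elementary Young inequality $ab\leqslant \tfrac{\varepsilon}{2}a^2+\tfrac{1}{2\varepsilon}b^2$ with a suitably small $\varepsilon$ then converts this into an inequality of the form $\|\boldsymbol{u}-\boldsymbol{u}^h\|_V^2\leqslant c\bigl(\|\boldsymbol{u}-\boldsymbol{v}^h\|_V^2+|R(\boldsymbol{u},\boldsymbol{v}^h)|\bigr)$; taking square roots and using $\sqrt{x+y}\leqslant\sqrt{x}+\sqrt{y}$ yields the pointwise estimate stated in the excerpt. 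Since $\boldsymbol{v}^h\in\mathcal{K}^h$ was arbitrary, passing to the infimum delivers (\ref{4.5}).

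The main obstacle is not analytic but bookkeeping: one must order the insertions of $\pm A\boldsymbol{u}$, $\pm j(\boldsymbol{u},\boldsymbol{v}^h)$, and $\pm j(\boldsymbol{u}^h,\boldsymbol{v}^h)$ so that exactly the mixed $j$-difference of (\ref{3.3}(b)) appears and the leftover is the consolidated residual $R(\boldsymbol{u},\boldsymbol{v}^h)$. Once that algebraic rearrangement is done cleanly, the remaining strong-monotonicity/Lipschitz/Young chain is routine; the role of $M>m$ is precisely to guarantee a positive coefficient in front of $\|\boldsymbol{u}-\boldsymbol{u}^h\|_V^2$ after absorption, which is what makes the Céa-type inequality close.
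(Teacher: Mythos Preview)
Your proposal is correct and follows essentially the same approach as the paper: substitute $\boldsymbol{v}=\boldsymbol{u}^h$ in (\ref{3.1}), combine with (\ref{4.1}), insert $\pm A\boldsymbol{u}$ and $\pm j(\boldsymbol{u},\boldsymbol{v}^h)$ to isolate the residual $R(\boldsymbol{u},\boldsymbol{v}^h)$ and the mixed $j$-difference, then apply (\ref{3.2})--(\ref{3.3}) and the triangle inequality to reach the same intermediate bound before absorbing via $M>m$. The paper omits the explicit mention of Young's inequality in the final step but the argument is identical.
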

Therefore, in order to judge whether the discrete approximate solution obtained by the finite element method converges, it is only necessary to verify whether the inequality (\ref{4.5}) tends to $0$.

\subsection{Convergence Analysis}

After obtaining the error estimate of the solution, in this subsection the convergence of the numerical solution will be analyzed. Note that the regularity of the solution to problem $P_{1}$ is unknown, and the probability that the solution does not possess a high degree of regularity is higher in the actual physical model. Therefore, it is particularly critical to study the convergence of numerical solutions without resorting to the high regularity assumption of the weak solution of $P_{1}$. Since the error estimation (\ref{4.5}) only holds under certain regularity assumptions, the following analysis is only based on the basic regularity condition $\boldsymbol{u}\in\mathcal{K}$. 
Then, we introduce two basic hypotheses from \cite{han2000evolutionary}:
\begin{hyp}[$H_{1}$]
There exists a dense subset $\mathcal{K}_0 \subset \mathcal{K}$ and a function $\gamma(h)>0$ which satisfy $\gamma(h) \to 0$ as $h\to 0^{+}$ such that
$$
\left\|\boldsymbol{v}-\mathcal{P}^{h} \boldsymbol{v}\right\|_{V} \leqslant \gamma(h)\|\boldsymbol{v}\|_{V},~~ \forall \boldsymbol{v} \in \mathcal{K}_0,
$$
where $\mathcal{P}^{h}: \mathcal{K}\to \mathcal{K}^{h}$ represents a projection operator defined by
$$
\left\|\boldsymbol{v}-\mathcal{P}^{h} \boldsymbol{v}\right\|_{V}=\inf_{\boldsymbol{v}^{h} \in V^{h}}\left\|\boldsymbol{v}-\boldsymbol{v}^{h}\right\|_{V}, \quad \boldsymbol{v} \in \mathcal{K}.
$$
\end{hyp}
Based on the fact that $\mathcal{K}$ is a Hilbert space and $\mathcal{K}^{h}$ is a finite-dimensional space, it can be deduced that $\mathcal{P}^{h}$ is well-defined, linear and non-expansive\cite{brenner2008mathematical}, where non-expansion means:
$$
\left\|\mathcal{P}^{h} \boldsymbol{v}_{1}-\mathcal{P}^{h} \boldsymbol{v}_{2}\right\|_{V} \leqslant \left\|\boldsymbol{v}_{1}-\boldsymbol{v}_{2}\right\|_{V} \quad \forall \boldsymbol{v}_{1}, \boldsymbol{v}_{2} \in \mathcal{K}.
$$
\begin{hyp}[$H_{2}$]
Let $S\subset \mathcal{K}$ be any bounded set in $\mathcal{K}$ and in $S\times \mathcal{K}$ the functional $j(\cdot,\cdot)$ is uniformly continuous with respect to the second argument, that is: $\forall \varepsilon>0, \exists \delta=\delta(S)>0$ such that $$
\left|j\left(\boldsymbol{w}, \boldsymbol{v}_{1}\right)-j\left(\boldsymbol{w}, \boldsymbol{v}_{2}\right)\right| < \varepsilon,
$$
$\forall \boldsymbol{w} \in S$, $\forall \boldsymbol{v}_{1}, \boldsymbol{v}_{2} \in \mathcal{K}$ with $\left\|\boldsymbol{v}_{1} - \boldsymbol{v}_{2}\right\|_{V}<\delta$.
\end{hyp}
Under the above two assumptions, the convergence of the numerical solution can be analyzed. First, the following lemmas hold:
\begin{lem}
In the case where Hypothesis $H_{1}$ holds, for any $\boldsymbol{v}\in\mathcal{K}$, the following convergence holds:
$$
\lim_{h\to 0^{+}}\left\|\boldsymbol{v}-\mathcal{P}^{h} \boldsymbol{v}\right\|_{V} = 0.
$$
\end{lem}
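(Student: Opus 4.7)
The plan is to prove this by a standard density plus best-approximation argument, combining Hypothesis $H_1$ with the defining property of $\mathcal{P}^h$ as the $V$-orthogonal projection onto $\mathcal{K}^h$.

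First I would fix $\boldsymbol{v}\in\mathcal{K}$ and $\varepsilon>0$. Since $\mathcal{K}_0$ is dense in $\mathcal{K}$ (the dense subset guaranteed by $H_1$), I can choose $\boldsymbol{v}_0\in\mathcal{K}_0$ with $\|\boldsymbol{v}-\boldsymbol{v}_0\|_V<\varepsilon/2$. The key observation is then that $\mathcal{P}^h\boldsymbol{v}_0\in\mathcal{K}^h$, so it is a legitimate competitor for the infimum defining $\mathcal{P}^h\boldsymbol{v}$. Hence
\[
\|\boldsymbol{v}-\mathcal{P}^h\boldsymbol{v}\|_V \;=\; \inf_{\boldsymbol{v}^h\in\mathcal{K}^h}\|\boldsymbol{v}-\boldsymbol{v}^h\|_V \;\leqslant\; \|\boldsymbol{v}-\mathcal{P}^h\boldsymbol{v}_0\|_V,
\]
and applying the triangle inequality I obtain
\[
\|\boldsymbol{v}-\mathcal{P}^h\boldsymbol{v}\|_V \;\leqslant\; \|\boldsymbol{v}-\boldsymbol{v}_0\|_V + \|\boldsymbol{v}_0-\mathcal{P}^h\boldsymbol{v}_0\|_V.
\]

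Next, since $\boldsymbol{v}_0\in\mathcal{K}_0$, Hypothesis $H_1$ gives $\|\boldsymbol{v}_0-\mathcal{P}^h\boldsymbol{v}_0\|_V\leqslant\gamma(h)\|\boldsymbol{v}_0\|_V$, and $\gamma(h)\to 0$ as $h\to 0^+$. So I can choose $h_0>0$ small enough that $\gamma(h)\|\boldsymbol{v}_0\|_V<\varepsilon/2$ for all $h<h_0$ (the case $\boldsymbol{v}_0=\mathbf{0}$ is trivial). Combining the two bounds yields $\|\boldsymbol{v}-\mathcal{P}^h\boldsymbol{v}\|_V<\varepsilon$ for $h<h_0$. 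Since $\varepsilon>0$ was arbitrary, this proves $\|\boldsymbol{v}-\mathcal{P}^h\boldsymbol{v}\|_V\to 0$ as $h\to 0^+$.

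There is no genuine obstacle here; this is a textbook density argument. The only delicate point worth being explicit about is why $\mathcal{P}^h\boldsymbol{v}_0$ is an admissible test element in the infimum for $\mathcal{P}^h\boldsymbol{v}$. This relies solely on $\mathcal{P}^h\boldsymbol{v}_0\in\mathcal{K}^h$, which follows directly from the definition of $\mathcal{P}^h$, and avoids needing any linearity or non-expansiveness of $\mathcal{P}^h$ as an operator on $\mathcal{K}$ (although the non-expansion property gives an equally short alternative route via $\|\mathcal{P}^h\boldsymbol{v}-\mathcal{P}^h\boldsymbol{v}_0\|_V\leqslant\|\boldsymbol{v}-\boldsymbol{v}_0\|_V$).
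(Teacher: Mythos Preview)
Your proof is correct and follows essentially the same density argument as the paper: pick an approximant from the dense subset $\mathcal{K}_0$, apply $H_1$ to it, and conclude via the triangle inequality. The only minor variation is that the paper invokes linearity and non-expansiveness of $\mathcal{P}^h$ to bound $\|\boldsymbol{v}-\mathcal{P}^h\boldsymbol{v}\|_V$ by $2\|\boldsymbol{v}-\boldsymbol{w}\|_V+\|\boldsymbol{w}-\mathcal{P}^h\boldsymbol{w}\|_V$, whereas you use the best-approximation property directly---your route is arguably cleaner since it sidesteps the question of whether the projection onto the convex set $\mathcal{K}^h$ is actually linear.
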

\begin{proof}
When $\boldsymbol{v}=0$, the lemma holds. Then let $\boldsymbol{v}\ne 0$. Since $\mathcal{K}_{0}$ is dense in $\mathcal{K}$, for any $\varepsilon>0$, we can find a $\boldsymbol{w}\in\mathcal{K}_{0}$ such that: 
$$
\|\boldsymbol{w}-\boldsymbol{v}\|_{V}<\frac{\varepsilon}{4}.
$$
Then, based on Hypothesis $H_{1}$, it can be known that:
$$
\left\|\boldsymbol{w}-\mathcal{P}^{h} \boldsymbol{w}\right\|_{V} \leqslant \gamma(h)\|\boldsymbol{w}\|_{V}.
$$
Since $\lim_{h\to 0^{+}}\gamma(h)=0$, there exist $h_{0}>0$ such that $\gamma(h) < \frac{\varepsilon}{2\|\boldsymbol{w}\|_{V}}$ for any $0<h<h_{0}$, that is 
$$
\left\|\boldsymbol{w}-\mathcal{P}^{h} \boldsymbol{w}\right\|_{V} <\frac{\varepsilon}{2}.
$$
Finally, from the triangle inequality and the non-expansion of the projection operator $\mathcal{P}^{h}$, we have:
\begin{align*}
\left\|\boldsymbol{v}-\mathcal{P}^{h} \boldsymbol{v}\right\|_{V} &=\left\|(\boldsymbol{v}-\boldsymbol{w}) - \mathcal{P}^{h}(\boldsymbol{v} - \boldsymbol{w}) + \left(\boldsymbol{w} - \mathcal{P}^{h} \boldsymbol{w}\right)\right\|_{V} \\
& \leqslant 2\|\boldsymbol{v} - \boldsymbol{w}\|_{V} + \left\|\boldsymbol{w} - \mathcal{P}^{h} \boldsymbol{w}\right\|_{V} \\
&\leqslant \varepsilon
\end{align*}
\end{proof}

Based on Lemma 4.2 and Hypothesis $H_{2}$, the convergence of the numerical solution has the following conclusions:
\begin{thm}
Assume the conditions (\ref{3.2})-(\ref{3.3}), hypotheses $H_{1}$, $H_{2}$ hold, $\boldsymbol{f}\in V$ and $M>m$. Then the solution of the problem $P^{h}_{1}$ has the following convergence:
$$
\left\|\boldsymbol{u}-\boldsymbol{u}^{h}\right\|_{V} \rightarrow 0 ~ \text { as } ~h \rightarrow 0.
$$
\end{thm}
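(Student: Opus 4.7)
The plan is to combine the abstract a priori estimate of Theorem 4.1 with the density-type approximation guaranteed by Hypothesis $H_{1}$ (via Lemma 4.2) and the uniform continuity of $j$ in its second argument guaranteed by Hypothesis $H_{2}$. Since inequality (\ref{4.5}) holds for every $\boldsymbol{v}^{h}\in\mathcal{K}^{h}$, it suffices to exhibit a single sequence along which both $\|\boldsymbol{u}-\boldsymbol{v}^{h}\|_{V}\to 0$ and $|R(\boldsymbol{u},\boldsymbol{v}^{h})|\to 0$ as $h\to 0^{+}$. The natural candidate is the projection $\boldsymbol{v}^{h}:=\mathcal{P}^{h}\boldsymbol{u}\in\mathcal{K}^{h}$ given by Hypothesis $H_{1}$.

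Applying Lemma 4.2 directly to $\boldsymbol{v}=\boldsymbol{u}\in\mathcal{K}$ yields $\|\boldsymbol{u}-\mathcal{P}^{h}\boldsymbol{u}\|_{V}\to 0$, which takes care of the first term on the right-hand side of (\ref{4.5}). For the residual, I would rewrite it by grouping the two linear-in-$\boldsymbol{v}^{h}$ pieces,
$$R(\boldsymbol{u},\mathcal{P}^{h}\boldsymbol{u}) = (A\boldsymbol{u}-\boldsymbol{f},\,\mathcal{P}^{h}\boldsymbol{u}-\boldsymbol{u})_{V} + \bigl[j(\boldsymbol{u},\mathcal{P}^{h}\boldsymbol{u})-j(\boldsymbol{u},\boldsymbol{u})\bigr],$$
and bound the two summands separately. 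The inner-product term is controlled by Cauchy--Schwarz by $\|A\boldsymbol{u}-\boldsymbol{f}\|_{V}\cdot\|\mathcal{P}^{h}\boldsymbol{u}-\boldsymbol{u}\|_{V}$; since (\ref{3.2})(b) guarantees $A\boldsymbol{u}\in V$ and $\boldsymbol{f}\in V$ by assumption, Lemma 4.2 drives this product to zero. For the $j$-term I would take $S:=\{\boldsymbol{u}\}$, which is trivially bounded in $\mathcal{K}$, and invoke Hypothesis $H_{2}$: since $\|\mathcal{P}^{h}\boldsymbol{u}-\boldsymbol{u}\|_{V}\to 0$, uniform continuity of $j(\boldsymbol{u},\cdot)$ in the second slot forces $|j(\boldsymbol{u},\mathcal{P}^{h}\boldsymbol{u})-j(\boldsymbol{u},\boldsymbol{u})|\to 0$. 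Hence $|R(\boldsymbol{u},\mathcal{P}^{h}\boldsymbol{u})|\to 0$, and its square root does as well; substitution into (\ref{4.5}) with $\boldsymbol{v}^{h}=\mathcal{P}^{h}\boldsymbol{u}$ closes the argument.

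The main obstacle is precisely the $j$-term in the residual: the mixed-difference estimate (\ref{3.3})(b) only controls expressions of the form $j(\boldsymbol{w}_{1},\boldsymbol{v}_{2})-j(\boldsymbol{w}_{1},\boldsymbol{v}_{1})-[j(\boldsymbol{w}_{2},\boldsymbol{v}_{2})-j(\boldsymbol{w}_{2},\boldsymbol{v}_{1})]$, which collapses to zero when $\boldsymbol{w}_{1}=\boldsymbol{w}_{2}$ and therefore says nothing about the plain single-slot increment $j(\boldsymbol{u},\mathcal{P}^{h}\boldsymbol{u})-j(\boldsymbol{u},\boldsymbol{u})$. Hypothesis $H_{2}$ is introduced exactly to bridge this gap, and the entire proof essentially reduces to isolating where it must be used; all remaining estimates are routine applications of Cauchy--Schwarz and Lemma 4.2, with no regularity beyond $\boldsymbol{u}\in\mathcal{K}$ required.
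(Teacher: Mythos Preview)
Your proposal is correct and follows essentially the same route as the paper's proof: choose $\boldsymbol{v}^{h}=\mathcal{P}^{h}\boldsymbol{u}$ in the estimate (\ref{4.5}), use Lemma 4.2 to kill the $\|\boldsymbol{u}-\mathcal{P}^{h}\boldsymbol{u}\|_{V}$ term, split the residual $R(\boldsymbol{u},\mathcal{P}^{h}\boldsymbol{u})$ into a linear piece bounded by $c\|\boldsymbol{u}-\mathcal{P}^{h}\boldsymbol{u}\|_{V}$ and the increment $j(\boldsymbol{u},\mathcal{P}^{h}\boldsymbol{u})-j(\boldsymbol{u},\boldsymbol{u})$, and then invoke Hypothesis $H_{2}$ for the latter. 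Your explicit identification of why (\ref{3.3})(b) alone cannot handle the $j$-increment (it collapses when $\boldsymbol{w}_{1}=\boldsymbol{w}_{2}$) is a useful clarification that the paper leaves implicit.
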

\begin{proof}
Before using Theorem 4.1, the upper bound of term $R\left(\boldsymbol{u}, \boldsymbol{v}^{h}\right)$ needs to be determined. Based on definition (\ref{4.4}), the properties of operators $A$, $\boldsymbol{f}$ and solution $\boldsymbol{u}$, the following inequalities hold:
$$
R\left(\boldsymbol{u}, \boldsymbol{v}^{h}\right) \leqslant c\left\|\boldsymbol{u} - \boldsymbol{v}^{h}\right\|_{V} + \left|j(\boldsymbol{u}, \boldsymbol{u}) - j\left(\boldsymbol{u}, \boldsymbol{v}^{h}\right)\right|,
$$
where the constant c depends only on $L_{A}$, $\|\boldsymbol{f}\|_{V}$ and $\|\boldsymbol{u}\|_{V}$. Therefore, taking $\boldsymbol{v}^{h} = \mathcal{P}^{h} \boldsymbol{u}$, from Theorem 4.1 and the above inequality, it is easy to deduce the following upper bound of error:
\begin{align*}
\|\boldsymbol{u} -\boldsymbol{u}^{h}\|_{V} \leqslant c \left( \|\boldsymbol{u} - \mathcal{P}^{h} \boldsymbol{u}\|_{V} + \left\|\boldsymbol{u} - \mathcal{P}^{h} \boldsymbol{u}\right\|_{V}^{\frac{1}{2}} + \left|j(\boldsymbol{u}, \boldsymbol{u}) - j\left(\boldsymbol{u}, \mathcal{P}^{h} \boldsymbol{u}\right)\right|^{\frac{1}{2}} \right).
\end{align*}
Lemma 4.2 has shown that:
$$
\lim_{h \rightarrow 0^{+}}\left\|\boldsymbol{u}-\mathcal{P}^{h} \boldsymbol{u}\right\|_{V}=0.
$$
Combined with the hypotheses $H_{2}$, $j(\cdot, \cdot)$ converges uniformly with respect to the second variable, it is easy to deduce:
$$
\lim_{h \rightarrow 0^{+}}\left|j(\boldsymbol{u}, \boldsymbol{u}) - j(\boldsymbol{u}, \mathcal{P}^{h}\boldsymbol{u})\right|=0.
$$
Therefore, $\lim_{h \rightarrow 0^{+}}\left\|\boldsymbol{u}-\boldsymbol{u}^{h}\right\|_{V}=0$.
\end{proof}

\subsection{The convergence order of the error estimate}
In the previous subsection, under the hypotheses $H_{1}$ and $H_{2}$, we discussed the convergence of the numerical solution, but did not verify whether the two assumptions hold. Then, in this subsection, the specific finite-dimensional space $\mathcal{K}^{h}$ will be defined. On this basis, the hypotheses $H_{1}$ and $H_{2}$ will be verified, and the order of convergence will be further given. It is worth noting that there are many ways to define $\mathcal{K}^{h}$, and the $\mathcal{K}^{h}$ given here is the most commonly used finite element space in the finite element method.

It is known that the multilayer elastic system is composed of $n$ layers of elastic bodies, and a nondegenerate quasiuniform subdivision of the $i$-th layer of elastic body $\Omega^{i}$ is denoted as $\mathcal{E}^{i}_{h}=\{E^{i}_{1},E^{i}_{2}, \ldots,E^{i}_{N^{i}_{h}}\}$\cite{riviere2003discontinuous,ciarlet2002finite}, where $E^{i}_{j}$ ($i=1,2,\ldots,n$, $j=1,2,\ldots,N^{i}_{h}$) stands for regular finite element and $h$ stands for the maximum diameter of all elements. Then let the finite element partition of the total elastic body region $\Omega$ be $\mathcal{E}_{h}=\cup_{i=1}^{n}\mathcal{E}^{i}_{h}$. Let $V^{ih}\subset V^{i}$ be the finite 
dimension space consisting of piecewise linear polynomial functions corresponding to subdivision $\mathcal{E}^{i}_{h}$ and $V^{h}=V^{1h}\times V^{2h}\times \cdots \times V^{nh}$. Therefore, the finite element space can be defined as $\mathcal{K}^{h}=V^{h}\cap\mathcal{K}$.
When the regularity of the solution $\boldsymbol{u}$ is high, the piecewise higher-order polynomial function can be used as a finite element basis function, and then a higher-order finite element space can be constructed.

In order to verify the hypothesis $H_{1}$, it is also necessary to define a dense subset in the $\mathcal{K}$ space, for which the following conclusions need to be introduced:
\begin{prp}[\cite{adams2003sobolev}]
Let $U \subset \mathbb{R}^{d}, d \geqslant 1$, be an open, bounded, Lipschitz domain and $L_{1} \subset \partial U$ be a relatively open set with a Lipschitz relative boundary. Then the space 
$$ 
X=\left\{x \in C^{\infty}(\bar{U}): x=0 \text{ in a neighborhood of } L_{1}\right\}
$$ 
is dense in 
$$
Y = \left\{y \in H^{1}(U): y= 0 \text{ a.e. on } L_{1}\right\}.
$$
\end{prp}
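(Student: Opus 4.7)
The plan is to prove this density statement by combining a local translation argument with standard mollification, glued together via a partition of unity. The underlying idea is that a function $y \in Y$ has vanishing trace on $L_{1}$, so after a small translation in the direction normal to $L_{1}$ and into $U$ the resulting function has support bounded away from $L_{1}$ and may be freely mollified without destroying either its smoothness or the property of vanishing near $L_{1}$.

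First I would cover the compact set $\overline{U}$ by finitely many open sets $\{V_{k}\}_{k=0}^{N}$, with $\overline{V_{0}} \cap \overline{L_{1}} = \emptyset$ and each $V_{k}$ for $k \geqslant 1$ a coordinate patch around a point of $\overline{L_{1}}$ in which a bi-Lipschitz change of variables flattens $L_{1} \cap V_{k}$ onto a piece of the hyperplane $\{x_{d} = 0\}$ and sends $U \cap V_{k}$ into the half-space $\{x_{d} > 0\}$. The hypothesis that $L_{1}$ has Lipschitz relative boundary inside the Lipschitz set $\partial U$ is precisely what permits such charts to cover $\overline{L_{1}}$. I would then choose a $C^{\infty}$ partition of unity $\{\phi_{k}\}$ subordinate to this cover, so that $y = \sum_{k} \phi_{k} y$ with each $\phi_{k} y$ again lying in $Y$.

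Next I would approximate each piece in turn. For the interior piece $\phi_{0} y$, the standard mollification $(\phi_{0} y) * \rho_{\varepsilon}$ is smooth, supported in an $\varepsilon$-neighbourhood of $\mathrm{supp}(\phi_{0})$ and hence zero near $L_{1}$, and converges to $\phi_{0} y$ in $H^{1}$. For each boundary piece $\phi_{k} y$ I would first extend it by zero across the flattened $L_{1}$ (legitimate because its trace on $\{x_{d} = 0\}$ vanishes, which is precisely the characterisation of $H^{1}_{0}$ on a half-space), then translate by $\tau e_{d}$ to obtain a function supported in $\{x_{d} \geqslant \tau\}$. Continuity of translation in $H^{1}$ ensures convergence as $\tau \to 0^{+}$, and mollifying this translate at scale $\delta < \tau/2$ yields a $C^{\infty}$ function whose support stays bounded away from $\{x_{d} = 0\}$. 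A diagonal argument in $(\tau, \delta)$ then produces smooth approximants of $\phi_{k} y$ vanishing in a neighbourhood of $L_{1}$. Pulling back through the bi-Lipschitz chart and summing over $k$ yields an element of $X$ converging to $y$ in $H^{1}(U)$.

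The main obstacle I expect lies in the boundary pieces, specifically in verifying that the zero-extension of $\phi_{k} y$ across the flattened $L_{1}$ really lies in $H^{1}$ (which uses the vanishing trace together with the Lipschitz regularity of the coordinate change) and that the bi-Lipschitz pullback of the mollified translate truly belongs to $C^{\infty}(\overline{U})$ with support bounded away from the original $L_{1}$. Both are routine technicalities from the theory of Sobolev spaces on Lipschitz domains once the geometric setup is in place, and once they are settled the density of $X$ in $Y$ follows directly.
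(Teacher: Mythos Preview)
The paper does not supply a proof of this proposition at all; it is simply quoted as a known result from the cited reference \cite{adams2003sobolev} and then used to construct the dense subset $\mathcal{K}_{0}$. There is therefore no argument in the paper to compare against.

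Your outline is the standard route to such density statements and is correct in spirit. One point to watch: your treatment of the ``interior piece'' $\phi_{0}y$ implicitly assumes it can be mollified directly, but $\phi_{0}y$ is only defined on $U$ and its support may meet $\partial U\setminus L_{1}$, so naive convolution is not available. You should either first apply a Stein extension to all of $\mathbb{R}^{d}$ and then mollify, or simply invoke the classical density of $C^{\infty}(\overline{U})$ in $H^{1}(U)$ for Lipschitz $U$ to handle this piece. A similar remark applies to the boundary patches $V_{k}$: flattening $L_{1}$ is not the same as flattening all of $\partial U\cap V_{k}$, and the final approximant must lie in $C^{\infty}(\overline{U})$, i.e.\ be smooth up to the \emph{whole} boundary, not merely vanish near $L_{1}$. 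Combining a global Stein extension (to secure smoothness up to $\partial U$) with your local translate-and-mollify procedure near $L_{1}$ (to secure vanishing in a neighbourhood of $L_{1}$) resolves both issues cleanly.
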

From this property, the dense subset in space $V^{i}$ can be defined as:
$$
V_{0}^{i}=\left\{\boldsymbol{v}^{i} \in \left(C^{\infty}(\bar{\Omega}^{i})\right)^{d}: \boldsymbol{v}^{i}=\mathbf{0} \text { in a neighborhood of } \Gamma_{1}^{i}\right\}.
$$
Let $V_{0}=V_{0}^{1}\times \cdots \times V_{0}^{n}$. Then the dense subset $\mathcal{K}_{0}$ in $\mathcal{K}$ can be defined as:
$$
\mathcal{K}_{0} = \mathcal{K} \cap V_{0}.
$$

Defining $\Pi^{h}: \mathcal{K} \rightarrow \mathcal{K}^{h}$ as a piecewise linear interpolation operator\cite{bergh2012interpolation}, and then combining the definition and properties of operator $\mathcal{P}^{h}$, we can deduce:
$$
\left\|\boldsymbol{v}-\mathcal{P}^{h} \boldsymbol{v}\right\|_{V} \leqslant\left\|\boldsymbol{v}-\Pi^{h} \boldsymbol{v}\right\|_{V} \leqslant c h\|\boldsymbol{v}\|_{H^{2}} \quad \forall \boldsymbol{v} \in \mathcal{K}_{0}.
$$
Hence, Hypothesis $H_{1}$ is valid.

Based on the definition (\ref{2.16}), it is easy to prove that $j(\cdot,\cdot)$ is Lipschitz continuous with respect to the second term. Naturally, Hypothesis $H_{2}$ is also verified.

Therefore, based on Theorem 3.2, it can be known that when $M>m$, there are unique solutions $\boldsymbol{u}$ and $\boldsymbol{u}^{h}$ for problem $P_{1}$ and $P_{1}^{h}$, respectively, and when the finite element space $\mathcal{K}^{h}$ is defined as above, the following convergence relationship holds according to Theorem 4.3:
$$
\lim_{h \to 0^{+}}\left\|\boldsymbol{u}-\boldsymbol{u}^{h}\right\|_{V} = 0 .
$$

Given that the numerical solution is known to converge to the true solution, the next goal is to determine the order of convergence, but this requires regularity conditions for the solution. Suppose $\boldsymbol{\sigma}^{i}(\boldsymbol{u}^{i})$ satisfies the basic smooth condition, that is:
$$
\boldsymbol{\sigma}^{i}(\boldsymbol{u}^{i})\in Q^{i}_{1} ~\text{and}~  \boldsymbol{u}^{i}\in \left(H^{2}(\Omega^{i})\right)^{d}, ~i=1,2,\ldots,n.
$$
Then, based on the original equations (\ref{2.1})-(\ref{2.7}) and (\ref{4.4}), the following derivation holds:
\begin{align*}
R(\boldsymbol{u},\boldsymbol{v}^{h}) =& (A\boldsymbol{u},\boldsymbol{v}^{h} -\boldsymbol{u})_{V} + (\boldsymbol{f},\boldsymbol{u} - \boldsymbol{v}^{h} )_{V} - j(\boldsymbol{u},\boldsymbol{u}) + j(\boldsymbol{u},\boldsymbol{v}^{h})\\
=& \sum_{i=1}^{n}\int_{\Omega^{i}}\boldsymbol{\sigma}^{i}(\boldsymbol{u}^{i}): \left( \boldsymbol{\varepsilon}\left(\boldsymbol{v}^{hi}\right) - \boldsymbol{\varepsilon}\left(\boldsymbol{u}^{i}\right) \right) dx - (\boldsymbol{f},\boldsymbol{v}^{h} - \boldsymbol{u} )_{V} \\ 
& - j(\boldsymbol{u},\boldsymbol{u}) + j(\boldsymbol{u},\boldsymbol{v}^{h})\\
=& \sum_{i=1}^{n}\left( \int_{\Gamma^{i}} \boldsymbol{\sigma}^{i}(\boldsymbol{u}^{i})\cdot \boldsymbol{\nu} \cdot\left(\boldsymbol{v}^{hi} -\boldsymbol{u}^{i}\right)d a \right) - \int_{\Gamma^{1}_{2}} \boldsymbol{f}_{2}\cdot \left( \boldsymbol{v}^{h1}-\boldsymbol{u}^{1} \right) d a\\ 
&- j(\boldsymbol{u},\boldsymbol{u}) + j(\boldsymbol{u},\boldsymbol{v}^{h})\\
=& \sum_{i=1}^{n-1}\left( \int_{\Gamma^{i}_{c}} \boldsymbol{\sigma}^{i}\cdot \boldsymbol{\beta}^{i} \cdot\left(\boldsymbol{v}^{hi} -\boldsymbol{u}^{i}\right) - \boldsymbol{\sigma}^{i+1}\cdot \boldsymbol{\alpha}^{i+1} \cdot\left(\boldsymbol{v}^{h(i+1)} -\boldsymbol{u}^{i+1}\right)d a \right)\\ 
& +\int_{\Gamma_{3}^{n}} {\sigma}^{n}_{\beta} \cdot  \left({v}^{n}_{\beta} - {u}^{n}_{\beta}\right) + \boldsymbol{\sigma}^{n}_{\eta} \cdot \left(\boldsymbol{v}^{n}_{\eta} - \boldsymbol{u}^{n}_{\eta} \right) d a + j(\boldsymbol{u},\boldsymbol{v}^{h})- j(\boldsymbol{u},\boldsymbol{u})\\
=& \sum_{i=1}^{n-1}\left( \int_{\Gamma^{i}_{c}} {\sigma}^{i}_{N} \left([{v}^{hi}_{N}] - [{u}^{i}_{N}]\right) + \boldsymbol{\sigma}^{i}_{T}  \cdot\left([\boldsymbol{v}^{hi}_{T}] - [\boldsymbol{u}^{i}_{T}] \right)d a \right)\\ 
& +\int_{\Gamma_{3}^{n}} {\sigma}^{n}_{\beta} \cdot  \left({v}^{n}_{\beta} - {u}^{n}_{\beta}\right) + \boldsymbol{\sigma}^{n}_{\eta} \cdot \left(\boldsymbol{v}^{n}_{\eta} - \boldsymbol{u}^{n}_{\eta} \right) d a + j(\boldsymbol{u},\boldsymbol{v}^{h})- j(\boldsymbol{u},\boldsymbol{u})
\end{align*}
Based on the contact conditions of the multilayer elastic system, it can be known that: ${\sigma}^{i}_{N}[{u}^{i}_{T}]=0$ and ${\sigma}^{i}_{N}[{v}^{hi}_{N}]\geqslant 0$. Therefore, according to the definition of functional $j(\cdot,\cdot)$, the following equation holds:
\begin{align*}
 R(\boldsymbol{u},\boldsymbol{v}^{h})
=& \sum_{i=1}^{n-1} \int_{\Gamma^{i}_{c}} {\sigma}^{i}_{N} [{v}^{hi}_{N}] + \boldsymbol{\sigma}^{i}_{T}  \cdot\left([\boldsymbol{v}^{hi}_{T}] - [\boldsymbol{u}^{i}_{T}] \right)d a \\
& +\int_{\Gamma_{3}^{n}} {\sigma}^{n}_{\beta} \cdot  \left({v}^{nh}_{\beta} - {u}^{n}_{\beta}\right)  + \boldsymbol{\sigma}^{n}_{\eta} \cdot \left(\boldsymbol{v}^{nh}_{\eta} - \boldsymbol{u}^{n}_{\eta} \right) d a \\
&+ \sum_{i=1}^{n-1}\int_{\Gamma_{c}^{i}} g_{T}^{i}\left(\sigma_{N}^{i}(\boldsymbol{u}^{i})\right) \left( |[\boldsymbol{v}_{T}^{hi}]| - |[\boldsymbol{u}_{T}^{i}]| \right) d a \\
& + \int_{\Gamma_{3}^{n}} g_{N}^{n}\left(u_{\beta}^{n}\right) \left( v_{\beta}^{nh} - u_{\beta}^{n} \right) + g_{T}^{n}\left(u_{\beta}^{n}\right)  \left( |\boldsymbol{v}_{\eta}^{nh}| - |\boldsymbol{u}_{\eta}^{n}| \right) d a.
\end{align*}
It can be found from formulations (\ref{2.7}) that the direction of $[\boldsymbol{u}_{T}^{i}]$ is always opposite to the direction of $\boldsymbol{\sigma}^{i}_{T}$. 
In addition, when $\boldsymbol{v}^{h}$ approaches $\boldsymbol{u}$, the direction of $\boldsymbol{v}^{h}$ is always fully consistent with $\boldsymbol{u}$, and Lemma 4.2 guarantees that when $h\to 0$, $\boldsymbol{v}^{h}$ is sufficiently approach to $\boldsymbol{u}$. Therefore, when $h$ is sufficiently small, the following inequalities always hold:
\begin{align*}
&\inf_{\boldsymbol{v}^{h}\in \mathcal{K}^{h}} |R(\boldsymbol{u},\boldsymbol{v}^{h})|\\
\leqslant& \inf_{\boldsymbol{v}^{h}\in \mathcal{K}^{h}} c\cdot \Big| \sum_{i=1}^{n-1} \int_{\Gamma^{i}_{c}} {\sigma}^{i}_{N} [{v}^{hi}_{N}] +  \left( g_{T}^{i}\left(\sigma_{N}^{i}(\boldsymbol{u}^{i})\right) - |\boldsymbol{\sigma}^{i}_{T}| \right) \left( |[\boldsymbol{v}_{T}^{hi}]| - |[\boldsymbol{u}_{T}^{i}]| \right) d a \\
& +\int_{\Gamma_{3}^{n}} \left(g_{N}^{n}\left(u_{\beta}^{n}\right) - |{\sigma}^{n}_{\beta}| \right)  \left({v}^{nh}_{\beta} - {u}^{n}_{\beta}\right)  + \left( g_{T}^{n}\left(u_{\beta}^{n}\right) - |\boldsymbol{\sigma}^{n}_{\eta}| \right) \left(|\boldsymbol{v}_{\eta}^{nh}| - |\boldsymbol{u}_{\eta}^{n}| \right) d a \Big|\\
\leqslant & \inf_{\boldsymbol{v}^{h}\in \mathcal{K}^{h}} c\cdot \|\boldsymbol{v}^{h} - \boldsymbol{u}\|_{L^{2}(\Gamma_{2}\cup \Gamma_{3})^{d}}.
\end{align*}
Then, the following estimation formula can be obtained in combination with Theorem 4.1:
\begin{align}\label{4.6}
\|\boldsymbol{u} -\boldsymbol{u}^{h}\|_{V} \leqslant c \inf_{\boldsymbol{v}^{h}\in \mathcal{K}^{h}}\left( \|\boldsymbol{u} -\boldsymbol{v}^{h}\|_{V} + \|\boldsymbol{u} - \boldsymbol{v}^{h}  \|_{L^{2}(\Gamma_{2}\cup \Gamma_{3})^{d}}^{\frac{1}{2}} \right).
\end{align}
Furthermore, according to the trace theorem in Sobolev space\cite{adams2003sobolev}:
\begin{align*}
\boldsymbol{u}^{i}\big|_{\Gamma_{2}^{i}\cup \Gamma_{3}^{i}} \in \left(H^{3/2}(\Gamma_{2}^{i}\cup \Gamma_{3}^{i})\right)^{d},~i=1,2\ldots n.
\end{align*}
The piecewise Lagrange interpolation operator on $\Omega^{i}$, $\Gamma^{i}_{2}$ and $\Gamma^{i}_{3}$ ($i=1,2,\ldots,n$) is denoted as $\Xi^{h}$\cite{ciarlet2002finite,bergh2012interpolation}. From the properties of this operator, $\Xi^{h}\boldsymbol{u}\in \mathcal{K}^{h}$ and the following inequality holds:
\begin{align*}
\|\boldsymbol{u}-\Xi^{h}\boldsymbol{u}\|_{V}&\leqslant ch \|\boldsymbol{u}\|_{H^{2}(\Omega)^{d}},\\
\|\boldsymbol{u}-\Xi^{h}\boldsymbol{u}\|_{L^{2}(\Gamma_{2}\cup \Gamma_{3})^{d}} &\leqslant ch^{3/2} \|\boldsymbol{u}\|_{H^{3/2}(\Gamma_{2}^{i}\cup \Gamma_{3}^{i})^{d}},
\end{align*}
Let $\boldsymbol{v}^{h}=\Xi^{h}\boldsymbol{u}$. When $h$ is sufficiently small, the following convergence relation can be deduced from the error estimation formula (\ref{4.6}):
$$
\left\|\boldsymbol{u}-\boldsymbol{u}^{h}\right\|_{V} \leqslant O\left(h^{3 / 4}\right).
$$
It is worth noting that when the regularity of the solution $\boldsymbol{u}$ is higher, the convergence order of the numerical solution is higher. In addition, if the properties of operator $\mathcal{A}^{i}$ and function $g_{j}^{i}$ are more explicit, the convergence properties here will be more precise.
\begin{rem}
The approximate analysis of the numerical solution shows that an accurate numerical solution can be obtained by using the finite element method for the variational inequality. And because in the variational inequality, the boundary conditions of the original equation system are greatly simplified and only the element space needs to be constrained, the complexity and computational efficiency of the finite element algorithm are also improved. Furthermore, it is worth noting that since the object to be solved is variational inequality, the finite element algorithm needs to be combined with the optimization algorithm, which will be beneficial to optimize the program using optimization theory. In conclusion, the analysis of pavement mechanical response model under the framework of variational inequality has its advantages over the traditional finite element method.
\end{rem}

\section{Conclusion}
So far, the existence and uniqueness of the solution of the multilayer elastic system under the framework of variational inequalities and the approximate error analysis based on the finite element numerical solution have been verified. The above results not only ensure the feasibility of studying the pavement mechanical response model under the finite element framework, but also show many advantages over the traditional model, such as the equivalence of the problem, the simplification of boundary conditions, and the flexibility and effectiveness of the design algorithm.

We believe that the follow-up research work can be carried out from the following three aspects: extending the elastic constitutive equation to the viscoelastic constitutive equation, which will more realistically simulate the asphalt pavement conditions; extending the friction boundary condition to the contact boundary condition with a bonding term, which will better model the complex contact state between layers; using rich optimization theory to write efficient finite element programs of variational inequalities, and comparing numerical solutions with actual data to improve the model.

\section*{Acknowledge}
This work was supported by the National Key Research and Development Project of China under Grant 2020YFA0714301 and the National Natural Science Foundation of China under Grant 61833005.

\appendix
\section{Appendix}
In this section, the specific process of deriving its variational form (\ref{2.19}) from the system of partial differential equations (\ref{2.1})-(\ref{2.7}) will be introduced.

Multiplying both sides of the equilibrium equation (\ref{2.2}) for the $i$-layer elastic body by $\boldsymbol{v}^{i}-\boldsymbol{u}^{i}$ and integrating in the region $\Omega^{i}$ yields:
\begin{equation}\label{a.1}
\begin{aligned}
&\int_{\Gamma_{2}^{i}} \boldsymbol{\sigma}^{i} \cdot \alpha^{i} \cdot \left(\boldsymbol{v}^{i} - \boldsymbol{u}^{i}\right) d a + \int_{\Gamma_{3}^{i}} \boldsymbol{\sigma}^{i} \cdot \beta^{i} \cdot \left(\boldsymbol{v}^{i} - \boldsymbol{u}^{i}\right) d a \\
& + \int_{\Omega^{i}} \boldsymbol{f}^{i}_{0}\left(\boldsymbol{v}^{i} - \boldsymbol{u}^{i}\right) d x 
= \int_{\Omega^{i}} \boldsymbol{\sigma}^{i} :  \left(\boldsymbol{\varepsilon}(\boldsymbol{v}^{i}) - \boldsymbol{\varepsilon}(\boldsymbol{u}^{i})\right) d x
\end{aligned}
\end{equation}
In this process, Green's formula in tensor form holds:
$$
(\boldsymbol{\sigma}^{i}, \boldsymbol{\varepsilon}(\boldsymbol{v}^{i}))_{L^{2}\left(\Omega^{i}\right)^{d}} + (\operatorname{Div} \boldsymbol{\sigma}, \boldsymbol{v}^{i})_{L^{2}\left(\Omega^{i}\right)} = \int_{\Gamma^{i}} \boldsymbol{\sigma}^{i} \cdot \nu \cdot \boldsymbol{v}^{i} \mathrm{d} a ~~~ \forall \boldsymbol{v} \in V^{i},
$$
where $\nu$ represents unit outer normal on $\Gamma^{i}$.
Then, by accumulating the formula (\ref{a.1}), it can be obtained:
\begin{align}\label{a.2}
& a(\boldsymbol{u},\boldsymbol{v}-\boldsymbol{u}) \nonumber \\
=& \sum_{i=1}^{n} \int_{\Gamma_{2}^{i}} \boldsymbol{\sigma}^{i} \cdot \alpha^{i} \cdot \left(\boldsymbol{v}^{i} - \boldsymbol{u}^{i}\right) d a + \sum_{i=1}^{n}\int_{\Gamma_{3}^{i}} \boldsymbol{\sigma}^{i} \cdot \beta^{i} \cdot \left(\boldsymbol{v}^{i} - \boldsymbol{u}^{i}\right) d a \nonumber \\ &+ \sum_{i=1}^{n}\int_{\Omega^{i}} \boldsymbol{f}^{i}_{0}\left(\boldsymbol{v}^{i} - \boldsymbol{u}^{i}\right) d x \nonumber \\
=& \sum_{i=1}^{n-1}\left( \int_{\Gamma_{2}^{i+1}} \boldsymbol{\sigma}^{i+1} \cdot \alpha^{i+1} \cdot \left(\boldsymbol{v}^{i+1} - \boldsymbol{u}^{i+1}\right) d a + \int_{\Gamma_{3}^{i}} \boldsymbol{\sigma}^{i} \cdot \beta^{i} \cdot \left(\boldsymbol{v}^{i} - \boldsymbol{u}^{i}\right) d a \right) \nonumber \\ 
& + L(\boldsymbol{v}-\boldsymbol{u}) + \int_{\Gamma_{3}^{n}} \boldsymbol{\sigma}^{n} \cdot \beta^{n} \cdot \left(\boldsymbol{v}^{n} - \boldsymbol{u}^{n}\right) d a.
\end{align}
In formula \ref{a.2}, based on the decomposition of normal and tangential forces and displacements, the third term satisfies:
\begin{align}\label{a.3}
&\int_{\Gamma_{3}^{n}} \boldsymbol{\sigma}^{n} \cdot \beta^{n} \cdot \left(\boldsymbol{v}^{n} - \boldsymbol{u}^{n}\right) d a \nonumber\\ =&\int_{\Gamma_{3}^{n}} {\sigma}^{n}_{\beta} \cdot  \left({v}^{n}_{\beta} - {u}^{n}_{\beta}\right) d a + \int_{\Gamma_{3}^{n}} \boldsymbol{\sigma}^{n}_{\eta} \cdot \left(\boldsymbol{v}^{n}_{\eta} - \boldsymbol{u}^{n}_{\eta} \right) d a \nonumber\\
=&\int_{\Gamma_{3}^{n}} -g^{n}_{N}(u_{\beta}^{n}) \cdot  \left({v}^{n}_{\beta} - {u}^{n}_{\beta}\right) d a + \int_{\Gamma_{3}^{n}} \boldsymbol{\sigma}^{n}_{\eta} \cdot \left(\boldsymbol{v}^{n}_{\eta} - \boldsymbol{u}^{n}_{\eta} \right) d a
\end{align}
Similarly, based on the decomposition of force and displacement in normal and tangential directions and equation (\ref{2.6}), the first term of equation (\ref{a.2}) satisfies:
\begin{equation}\label{a.4}
\begin{aligned}
&\sum_{i=1}^{n-1}\left( \int_{\Gamma_{2}^{i+1}} \boldsymbol{\sigma}^{i+1} \cdot \alpha^{i+1} \cdot \left(\boldsymbol{v}^{i+1} - \boldsymbol{u}^{i+1}\right) d a + \int_{\Gamma_{3}^{i}} \boldsymbol{\sigma}^{i} \cdot \beta^{i} \cdot \left(\boldsymbol{v}^{i} - \boldsymbol{u}^{i}\right) d a \right)\\
=&\sum_{i=1}^{n-1}\Bigg(\int_{\Gamma_{2}^{i+1}} {\sigma}^{i+1}_{\alpha} \cdot \left({v}^{i+1}_{\alpha} - {u}^{i+1}_{\alpha}\right) d a + \int_{\Gamma_{2}^{i+1}} \boldsymbol{\sigma}^{i+1}_{\tau} \cdot \left(\boldsymbol{v}^{i+1}_{\tau} - \boldsymbol{u}^{i+1}_{\tau}\right) d a \\
&+ \int_{\Gamma_{3}^{i}} {\sigma}^{i}_{\beta} \cdot \left({v}^{i}_{\beta} - {u}^{i}_{\beta}\right) d a + \int_{\Gamma_{3}^{i}} \boldsymbol{\sigma}^{i}_{\eta} \cdot \left(\boldsymbol{v}^{i}_{\eta} - \boldsymbol{u}^{i}_{\eta}\right) da \Bigg)\\
=& \sum_{i=1}^{n-1}\left(\int_{\Gamma_{c}^{i}} {\sigma}^{i}_{N} \cdot \left(\left[v_{N}^{i}\right]-\left[u_{N}^{i}\right]\right) d a + \int_{\Gamma_{c}^{i}} \boldsymbol{\sigma}^{i}_{T} \cdot \left(\left[\boldsymbol{v}_{T}^{i}\right] - \left[\boldsymbol{u}_{T}^{i}\right]\right) d a\right)
\end{aligned}
\end{equation}
According to the mutual non-penetration condition, ${\sigma}^{i}_{N}\cdot [u_{N}^{i}] = 0$ and when $\boldsymbol{v}\in\mathcal{K}$, ${\sigma}^{i}_{N}\cdot [v_{N}^{i}] \geqslant 0$.
Therefore, by substituting formulations (\ref{a.3}) - (\ref{a.4}) into equation (\ref{a.2}) and combining with boundary conditions (\ref{2.5}) and (\ref{2.8}), the following inequality relation can be derived:
\begin{align*}
& a(\boldsymbol{u},\boldsymbol{v}-\boldsymbol{u})\\
=&  L(\boldsymbol{v}-\boldsymbol{u}) - \int_{\Gamma_{3}^{n}} g^{n}_{N}(u_{\beta}^{n}) \cdot \left({v}^{n}_{\beta} - {u}^{n}_{\beta}\right) d a + \int_{\Gamma_{3}^{n}} \boldsymbol{\sigma}^{n}_{\eta} \cdot \left(\boldsymbol{v}^{n}_{\eta} - \boldsymbol{u}^{n}_{\eta} \right) d a\\
& + \sum_{i=1}^{n-1} \int_{\Gamma_{c}^{i}} {\sigma}^{i}_{N} \cdot \left(\left[v_{N}^{i}\right]-\left[u_{N}^{i}\right]\right) d a + \sum_{i=1}^{n-1} \int_{\Gamma_{c}^{i}} \boldsymbol{\sigma}^{i}_{T} \cdot \left(\left[\boldsymbol{v}_{T}^{i}\right] - \left[\boldsymbol{u}_{T}^{i}\right]\right) d a\\
\geqslant & L(\boldsymbol{v}-\boldsymbol{u}) - \int_{\Gamma_{3}^{n}} g^{n}_{N}(u_{\beta}^{n}) \cdot  \left({v}^{n}_{\beta} - {u}^{n}_{\beta}\right) d a - \int_{\Gamma_{3}^{n}} g^{n}_{T}(u_{\beta}^{n}) \cdot \left(|\boldsymbol{v}^{n}_{\eta}| - |\boldsymbol{u}^{n}_{\eta}| \right) d a\\
& - \sum_{i=1}^{n-1} \int_{\Gamma_{c}^{i}} g^{i}_{T}(\sigma_{N}^{i}(\boldsymbol{u}^{i})) \cdot \left(|[\boldsymbol{v}_{T}^{i}]| - |[\boldsymbol{u}_{T}^{i}]|\right) d a\\
\geqslant & L(\boldsymbol{v}-\boldsymbol{u}) - j(\boldsymbol{u},\boldsymbol{v}) + j(\boldsymbol{u},\boldsymbol{u}).
\end{align*}

Hence, problem $P_{1}$ can be derived from problem $P_{0}$, that is, the solution of problem $P_{0}$ must be the solution of problem $P_{1}$. If the solution of problem $P_{1}$ exists and is unique, then problem $P_{1}$ is equivalent to problem $P_{0}$.

\bibliographystyle{unsrt}
\bibliography{ref}

\end{document}